\newcommand{\Q}{\mathbb{Q}}
\newcommand{\C}{\mathbb{C}}
\newcommand{\Z}{\mathbb{Z}}
\newcommand{\N}{\mathbb{N}}
\newcommand{\mf}{\mathfrak}
\DeclareMathOperator{\wt}{wt}
\DeclareMathOperator{\Cov}{Cov}
\DeclareMathOperator{\spn}{span}
\newcommand{\bs}{\boldsymbol}
\renewcommand{\tilde}{\widetilde}
\renewcommand{\S}{\mathfrak{S}}
\renewcommand{\SS}{\tilde{\mathfrak{S}}}
\newtheorem{theorem}{Theorem}[section]
\newtheorem{def-prop}[theorem]{Definition-Proposition}
\newtheorem{prop}[theorem]{Proposition}
\newtheorem{lemma}[theorem]{Lemma}
\theoremstyle{definition}
\newtheorem{ex}[theorem]{Example}
\newtheorem{definition}[theorem]{Definition}
\theoremstyle{remark}
\newtheorem*{remark}{Remark}
\begin{document}

\title[Weighted enumeration of Bruhat chains]{Padded Schubert polynomials and weighted enumeration of Bruhat chains}
\author{Christian Gaetz}
\thanks{C.G. was supported by a National Science Foundation Graduate Research Fellowship under Grant No. 1122374}
\author{Yibo Gao}
\address{Department of Mathematics, Massachusetts Institute of Technology, Cambridge, MA.}
\email{\href{mailto:gaetz@mit.edu}{gaetz@mit.edu}} 
\email{\href{mailto:gaoyibo@mit.edu}{gaoyibo@mit.edu}}
\date{\today}

\begin{abstract}
We use the recently-introduced \emph{padded Schubert polynomials} to prove a common generalization of the fact that the weighted number of maximal chains in the strong Bruhat order on the symmetric group is ${n \choose 2}!$ for both the \emph{code weights} and the \emph{Chevalley weights}, generalizing a result of Stembridge \cite{Stembridge}.  We also define weights which give a one-parameter family of strong order analogues of Macdonald's well-known reduced word identity for Schubert polynomials \cite{Macdonald}.
\end{abstract}

\maketitle

\section{Introduction} \label{sec:intro}

Let $\bs{S}_n$ denote the strong Bruhat order on the symmetric group $S_n$ (see Section \ref{sec:background} for background and definitions).  Given a function $\wt:\Cov(\bs{S}_n) \to R$ from the set of covering relations of $\bs{S}_n$ to a ring $R$, and a saturated chain $c=(u_1 \lessdot u_2 \lessdot \cdots \lessdot u_k)$, we define the weight of $c$ mutliplicatively:
\[
\wt(c):=\prod_{i=1}^{k-1} \wt(u_i \lessdot u_{i+1}).
\]
For $v \leq w$ in $\bs{S}_n$ we let
\[
m_{\wt}(v,w):=\sum_{c:v \to w} \wt(c)
\]
denote the total weighted number of chains over all saturated chains $c$ from $v$ to $w$.

In this paper, we study several classes of weights which generalize the recently introduced \emph{code weights} \cite{duality-paper} and the more classical \emph{Chevalley weights}, studied, for example, by Postnikov and Stanley \cite{Postnikov-Stanley} and by Stembridge \cite{Stembridge}.  Some building blocks for these new weights are given in Definition \ref{def:matrix-weights}.

\begin{definition} \label{def:matrix-weights}
For $v \lessdot w=vt_{ij}$ a covering relation in $\bs{S}_n$ with $i<j$, let $a_{v \lessdot w}, b_{v \lessdot w}, c_{v \lessdot w}$, and $d_{v \lessdot w}$ denote the number of dots in the regions $A,B,C$, and $D$ respectively in Figure \ref{fig:matrix-weights}.  That is,
\begin{align*}
    a_{v \lessdot w} &= \# \{ k < i \: | \: v_i < v_k < v_j \} \\
    b_{v \lessdot w} &= \# \{ i < k < j \: | \: v_k > v_j \} \\
    c_{v \lessdot w} &= \# \{ k > j \: | \: v_i < v_k < v_j \} \\
    d_{v \lessdot w} &= \# \{ i < k < j \: | \: v_k < v_i \}.
\end{align*}
Note that we always have $b_{v \lessdot w} + d_{v \lessdot w}=j-i-1$ and $a_{v \lessdot w} + c_{v \lessdot w}=v_j-v_i-1$.
\end{definition}

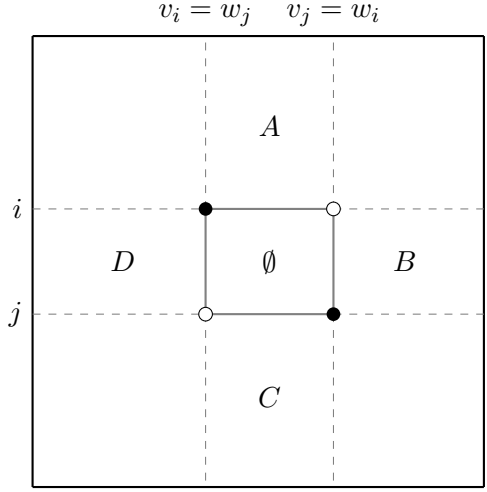
\begin{figure}[h] 
    \centering
    \begin{tikzpicture}
    \draw[black, thick] (-3,3)--(3,3);
    \draw[black, thick] (-3,-3)--(3,-3);
    \draw[black, thick] (-3,3)--(-3,-3);
    \draw[black, thick] (3,3)--(3,-3);
    
    \draw[gray, thick] (-0.7,0.7)--(1,0.7);
    \draw[gray, thick] (-0.7,-0.7)--(1,-0.7);
    \draw[gray, thick] (-0.7,0.7)--(-0.7,-0.7);
    \draw[gray, thick] (1,0.7)--(1,-0.7);
    
    \draw[gray, dashed] (-3,-0.7)--(-0.7,-0.7);
    \draw[gray, dashed] (-3,0.7)--(-0.7,0.7);
    \draw[gray, dashed] (-0.7,0.7)--(-0.7,3);
    \draw[gray, dashed] (-0.7,-0.7)--(-0.7,-3);
    \draw[gray, dashed] (1,-0.7)--(1,-3);
    \draw[gray, dashed] (1,-0.7)--(3,-0.7);
    \draw[gray, dashed] (1,0.7)--(3,0.7);
    \draw[gray, dashed] (1,0.7)--(1,3);
    
    \node[circle,fill=white,draw,scale=0.5](aji) at (-0.7,-0.7) {};
    \node[circle,fill=white,draw,scale=0.5](aij) at (1,0.7) {};
    \node[circle,fill=black,scale=0.5](aii) at (-0.7,0.7) {};
    \node[circle,fill=black,scale=0.5](ajj) at (1,-0.7) {};
    
    \node(i) at (-3,0.7) [left]{$i$};
    \node(j) at (-3,-0.7) [left]{$j$};
    \node(wi) at (-0.7,3) [above]{$v_i=w_j$};
    \node(wj) at (1,3) [above]{$v_j=w_i$};
    
    \node(A) at (.15,1.8) {$A$};
    \node(B) at (1.95,0) {$B$};
    \node(C) at (.15,-1.8) {$C$};
    \node(D) at (-1.8,0) {$D$};
    \node(middle) at (.15,0) {$\emptyset$};
    \end{tikzpicture}
    \caption{For $v \lessdot w$ a covering relation in the strong order, the permutation matrices for $v$ and $w$ agree, except that the black dots in $v$ are replaced with the white dots in $w$.  No dots may occupy the central region; the numbers of dots in the four labeled regions $A,B,C$ and $D$ are used in Definition \ref{def:matrix-weights}.}
    \label{fig:matrix-weights}
\end{figure}

The \emph{Chevalley weights} $\wt_{Chev}(v \lessdot w): \Cov(\bs{S}_n) \to \Z[\alpha_1,...,\alpha_{n-1}]$ assign weight $\alpha_i+ \cdots + \alpha_{j-1}$ to the covering relation $v \lessdot w=vt_{ij}$, where $t_{ij}=(i \: j)$ is a transposition.  It was shown by Stembridge \cite{Stembridge} that:
\begin{equation} \label{eq:chev-weight-maximal-vars}
m_{Chev}(e,w_0)(\alpha_1,...,\alpha_{n-1})={n \choose 2}!\cdot \prod_{1 \leq k < \ell \leq n-1} \frac{\alpha_k + \cdots + \alpha_{\ell-1}}{\ell-k}
\end{equation}
where $w_0=n (n-1) \cdots 2 1$ denotes the longest permutation.  Specializing all $\alpha_i=1$ recovers a classical fact, due essentially to Chevalley:
\begin{equation}
\label{eq:chev-weight-maximal}
m_{Chev}(e,w_0)(1,...,1)={n \choose 2}!.
\end{equation}

Recently, a new set of weights, the \emph{code weights} $\wt_{code}: \Cov(\bs{S}_n) \to \N$ were defined in the course of proving the Sperner property for the weak Bruhat order \cite{sperner-paper}.  In the notation of Definition \ref{def:matrix-weights}, the code weights are defined by $\wt_{code}(v \lessdot w)=1+2b_{v\lessdot w}$.  In \cite{duality-paper}, it was shown that 
\begin{equation} \label{eq:strong-macdonald}
m_{code}(w,w_0)=\left({n \choose 2} - \ell(w)\right)! \cdot \mf{S}_w(1,...,1)
\end{equation} 
where $\mf{S}_w$ is the \emph{Schubert polynomial} (see Section \ref{sec:background}), providing a strong Bruhat order analogue of Macdonald's well known identity for $\mf{S}_w(1,...,1)$ as a weighted enumeration of chains in the \emph{weak} Bruhat order (see Proposition \ref{prop:original-macdonald}).  Letting $w=e$ in (\ref{eq:strong-macdonald}) gives:
\begin{equation} \label{eq:code-weight-maximal}
    m_{code}(e,w_0)={n \choose 2}!.
\end{equation}
One motivation of this work is to understand and generalize the coincidence between (\ref{eq:chev-weight-maximal}) and (\ref{eq:code-weight-maximal}); this is done in Theorem \ref{thm:any-pair}.  This theorem, together with the dual weak and strong order identities of \cite{Hamaker} and \cite{duality-paper}, also gives the first explanation of why the weighted path counts $e \to w_0$ for Macdonald's weak order weights and Chevalley's strong order weights should agree.

\begin{theorem} \label{thm:any-pair}
Let $f: \Cov(\bs{S}_n) \to \Z[z_A,z_B,z_C,z_D]$ be the weight function defined by
$$f(v \lessdot w):=1+ a_{v \lessdot w}z_A+b_{v \lessdot w}z_B+c_{v \lessdot w}z_C+d_{v \lessdot w}z_D.$$
Let $\wt: \Cov(\bs{S}_n) \to \Z[z]$ be any weight function obtained from $f$ by specializing the variables so that $\{z_A,z_B,z_C,z_D\}=\{0,0,z,2-z\}$ as multisets, then:
\[
m_{\wt}(e,w_0)={n \choose 2}!.
\]
In particular, $m_{\wt}(e,w_0)$ does not depend on $z$.
\end{theorem}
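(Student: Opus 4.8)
The plan is to reduce, via symmetries of the strong order, to two model specializations, and then to realize $m_{\wt}(e,w_0)$ as a specialization of the padded Schubert polynomial of $w_0$ in which the hypothesis $\{z_A,z_B,z_C,z_D\}=\{0,0,z,2-z\}$ is precisely what forces independence of $z$. For the reduction: each of the bijections $v\mapsto vw_0$, $v\mapsto w_0v$, and $v\mapsto v^{-1}$ of $S_n$ carries the set of maximal chains $e\to w_0$ to itself (the first two are anti-automorphisms of the strong order, sending such a chain to its reverse composed with $w_0$; the third is an automorphism), and inspecting Definition~\ref{def:matrix-weights} one checks that on every cover they transpose the regions $A\leftrightarrow C$; $B\leftrightarrow D$; and $A\leftrightarrow D$ together with $B\leftrightarrow C$, respectively. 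The induced permutations of $\{z_A,z_B,z_C,z_D\}$ generate a dihedral group of order $8$ under which $m_{\wt}(e,w_0)$ is invariant; hence every specialization with multiset $\{0,0,z,2-z\}$ is equivalent to one of exactly two, the \emph{diagonal} one ($z_A=z_C=0$, $\{z_B,z_D\}=\{z,2-z\}$) and the \emph{adjacent} one ($z_A=z_B=0$, $\{z_C,z_D\}=\{z,2-z\}$). In each, the symmetry exchanging $z$ and $2-z$ shows $m_{\wt}(e,w_0)$ is a polynomial in $z$ invariant under $z\mapsto 2-z$, equal to $\binom{n}{2}!$ at $z\in\{0,2\}$ by (\ref{eq:code-weight-maximal}) and, in the diagonal case, also at $z=1$ by (\ref{eq:chev-weight-maximal}); but a naive bound on $\deg_z m_{\wt}(e,w_0)$ is much too weak to finish from these values, so structural input is needed.

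For that input I would peel a maximal chain $e\to w_0$ from the bottom: the first step is always $e\lessdot s_i$, where $a=b=c=d=0$ and the weight is $1$, and the remaining steps produce a Chevalley/Monk-type recursion for $m_{\wt}(w,w_0)$ — the recursion governing padded Schubert polynomials, and the one used to prove (\ref{eq:strong-macdonald}) for the code weights. Carrying the parameters through this recursion, I expect an identity of the form $m_{\wt}(w,w_0)=\bigl(\binom{n}{2}-\ell(w)\bigr)!\cdot(\text{evaluation of the padded Schubert polynomial of }w)$, a one-parameter strong-order analogue of Macdonald's reduced word identity; the theorem is then the case $w=e$, where the padded Schubert polynomial of the identity is $1$. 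The decisive point to verify is that once two of $z_A,z_B,z_C,z_D$ vanish, the remaining two enter the evaluation only through the combination $z+(2-z)=2$, which is exactly the value of the corresponding combination for the code weights $1+2b$; the evaluation is then literally the code-weight one and carries no dependence on $z$.

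I expect the main obstacle to be precisely this collapse of the deformation inside the recursion: equivalently, that the polynomial $\sum_S N(S)\prod_{\rho\in S}g(\rho)$ — where $g=a z_A+b z_B+c z_C+d z_D$ and $N(S)$ counts maximal chains $e\to w_0$ containing every cover of the set $S$ — is, after specialization, insensitive to how the mass $2$ is split between $z$ and $2-z$. I would approach this locally, rewriting $1+a z_A+b z_B+c z_C+d z_D$ at a single cover by means of the balance relations $b+d=j-i-1$ and $a+c=v_j-v_i-1$ of Definition~\ref{def:matrix-weights} into a form where this insensitivity is manifest, and then propagate it through the padded Schubert recursion. I anticipate the diagonal case to go through most directly, since it degenerates to the already-understood code and Chevalley weights, while the adjacent case should need extra bookkeeping, perhaps via a further use of the symmetry group or a separate instance of the padded identity.
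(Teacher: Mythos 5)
Your symmetry reduction is sound and matches the paper's: the three involutions of Proposition \ref{prop:symmetries} generate a dihedral group permuting $\{A,B,C,D\}$ whose orbits on unordered pairs are exactly the two ``diagonals'' $\{A,C\},\{B,D\}$ and the four ``edges,'' so two representative cases suffice, and maximal chains $e\to w_0$ are indeed carried to maximal chains $e\to w_0$. The gaps are in what you do with the two cases. For the edge case your plan is essentially the paper's proof of Theorem \ref{thm:generalized-macdonald} (a downward induction on $\ell(w)$ establishing $m_{\wt}(w,w_0)=\bigl(\binom{n}{2}-\ell(w)\bigr)!\,\mathfrak{S}_w(1,\ldots,1)$), but the decisive cancellation you flag --- insensitivity to how the mass $2$ is split between the two surviving coefficients --- is exactly Lemma \ref{lem:gen-mac-notdepend}, i.e.\ $\sum_u \SS_u(1,\ldots,1)(b_{w\lessdot u}-c_{w\lessdot u})=0$, and this is not a local statement: the balance relations $b+d=j-i-1$ and $a+c=v_j-v_i-1$ pair $B$ with $D$ and $A$ with $C$, which are the wrong pairs for an edge. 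The paper proves the lemma globally, from $\mathfrak{S}_u(1,\ldots,1)=\mathfrak{S}_{u^{-1}}(1,\ldots,1)$ (a consequence of Proposition \ref{prop:original-macdonald}) combined with $b_{v\lessdot w}=c_{v^{-1}\lessdot w^{-1}}$ and the principal specialization of Proposition \ref{prop:padded}; you would need to supply this or an equivalent.

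More seriously, the diagonal case cannot be run through the same Macdonald-type recursion at all, because for diagonal weights $m_{\wt}(w,w_0)$ genuinely depends on $z$ once $w\neq e$: already in $S_3$, taking $w=213$ and $z_A=z_C=0$, $\{z_B,z_D\}=\{z,2-z\}$, one computes $m_{\wt}(213,321)=2+z_D$, which interpolates between the code value $2=2!\,\mathfrak{S}_{213}(1,1,1)$ and the Chevalley value $3$. Hence there is no $z$-independent identity of the form $\bigl(\binom{n}{2}-\ell(w)\bigr)!\cdot(\text{padded evaluation})$ to induct on, and your prediction that the diagonal case ``goes through most directly'' is backwards. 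The paper handles it with entirely different machinery (Section \ref{sec:generalized-chevalley}): Monk's rule realizes the $\alpha$-part of the weight as multiplication by $\beta_1x_1+\cdots+\beta_{n-1}x_{n-1}$ in the coinvariant algebra, the $(b-d)$-deformation becomes an operator $R$ expressed via $\Delta$, the commutators $[M_k,R]=M_{k+1}$ are computed (Lemma \ref{lem:MR-RM}), and a divisibility argument (Lemma \ref{lem:MS=0}) kills every word except $M^{\binom{n}{2}}$ in the expansion of $(M+zR)^{\binom{n}{2}}\cdot 1$, so that only the value $z=0$ survives. None of this is present in, or recoverable from, your outline, so the proposal as it stands does not prove the theorem.
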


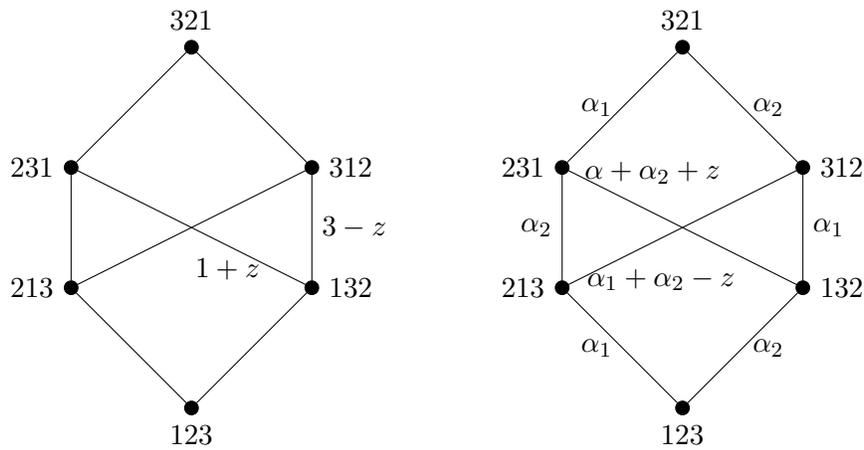
\begin{figure}[h]
    \centering
    \begin{tikzpicture} [scale=1.6]
\node[draw,shape=circle,fill=black,scale=0.5] (a0)[label=below:{$123$}] at (0,0) {};
\node[draw,shape=circle,fill=black,scale=0.5](b0)[label=left:{$213$}] at (-1,1) {};
\node[draw,shape=circle,fill=black,scale=0.5](b1)[label=right:{$132$}] at (1,1) {};
\node[draw,shape=circle,fill=black,scale=0.5](c0)[label=left:{$231$}] at (-1,2) {};
\node[draw,shape=circle,fill=black,scale=0.5](c1)[label=right:{$312$}] at (1,2) {};
\node[draw,shape=circle,fill=black,scale=0.5](d0)[label={$321$}] at (0,3) {};

\node(l)[label={$1+z$}] at (0.3,0.9) {};

\draw (a0) -- node[left] {} (b0);
\draw (a0) -- node[right] {} (b1);
\draw (b0) -- node[left] {} (c0);
\draw (b1) -- node[right] {$3-z$} (c1);
\draw (b0) -- node[below] {} (c1);
\draw (b1) -- node[below] {} (c0);
\draw (c0) -- node[left] {} (d0);
\draw (c1) -- node[right] {} (d0);
\end{tikzpicture} \hspace{0.4in}
\begin{tikzpicture} [scale=1.6]
\node[draw,shape=circle,fill=black,scale=0.5] (a0)[label=below:{$123$}] at (0,0) {};
\node[draw,shape=circle,fill=black,scale=0.5](b0)[label=left:{$213$}] at (-1,1) {};
\node[draw,shape=circle,fill=black,scale=0.5](b1)[label=right:{$132$}] at (1,1) {};
\node[draw,shape=circle,fill=black,scale=0.5](c0)[label=left:{$231$}] at (-1,2) {};
\node[draw,shape=circle,fill=black,scale=0.5](c1)[label=right:{$312$}] at (1,2) {};
\node[draw,shape=circle,fill=black,scale=0.5](d0)[label={$321$}] at (0,3) {};

\node(l1)[label={$\alpha_1+\alpha_2-z$}] at (-0.18,0.81) {};
\node(l2)[label={$\alpha+\alpha_2+z$}] at (-0.25,1.7) {};

\draw (a0) -- node[left] {$\alpha_1$} (b0);
\draw (a0) -- node[right] {$\alpha_2$} (b1);
\draw (b0) -- node[left] {$\alpha_2$} (c0);
\draw (b1) -- node[right] {$\alpha_1$} (c1);
\draw (b0) -- node {} (c1);
\draw (b1) -- node {} (c0);
\draw (c0) -- node[left] {$\alpha_1$} (d0);
\draw (c1) -- node[right] {$\alpha_2$} (d0);
\end{tikzpicture}
    \caption{The weights considered in Theorem \ref{thm:generalized-macdonald} (left) and in Theorem \ref{thm:generalized-chevalley} (right) for $\bs{S}_3$.  Unlabelled edges have weight 1. }
    \label{fig:S3-weights}
\end{figure}

Theorem \ref{thm:generalized-chevalley} provides a common generalization of (\ref{eq:chev-weight-maximal-vars}) and (\ref{eq:code-weight-maximal}); see Example \ref{ex:specializations}.

\begin{theorem} \label{thm:generalized-chevalley}
Let $\wt: \Cov(\bs{S}_n) \to \Z[\alpha_1,...,\alpha_{n-1},z]$ be defined by 
\[
\wt(v \lessdot vt_{ij})=\alpha_i + \alpha_{i+1} + \cdots + \alpha_{j-1} + (b_{v \lessdot vt_{ij}}-d_{v \lessdot vt_{ij}})z.
\]
Then 
\[
m_{\wt}(e,w_0)={n \choose 2}! \cdot \prod_{k < \ell} \frac{\alpha_k + \cdots + \alpha_{\ell-1}}{\ell-k}.
\]
In particular, $m_{\wt}(e,w_0)$ does not depend on $z$.
\end{theorem}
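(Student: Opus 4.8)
Write $N=\binom{n}{2}$. Since $\bs{S}_n$ is graded, the numbers $m_{\wt}(v,w_0)$ satisfy the downward recursion
\[
m_{\wt}(v,w_0)=\sum_{w \gtrdot v}\wt(v \lessdot w)\,m_{\wt}(w,w_0),\qquad m_{\wt}(w_0,w_0)=1,
\]
so, letting $B$ be the raising operator $B[v]=\sum_{w\gtrdot v}\wt(v\lessdot w)[w]$ on $\bigoplus_{w\in S_n}\Z[\alpha_1,\dots,\alpha_{n-1},z]\cdot[w]$, we have $m_{\wt}(e,w_0)=\langle [w_0],\,B^N[e]\rangle$. I would decompose $B=B_0+zC$, where $B_0[v]=\sum_{w=vt_{ij}\gtrdot v}(\alpha_i+\cdots+\alpha_{j-1})[w]=\sum_k\alpha_k M_k[v]$ with $M_k$ the Monk operator, and $C[v]=\sum_{w=vt_{ij}\gtrdot v}(b_{v\lessdot w}-d_{v\lessdot w})[w]$ is the new correction. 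The whole point is to show $\langle[w_0],B^N[e]\rangle=\langle[w_0],B_0^N[e]\rangle$; the padded Schubert polynomials are the vehicle for encoding the deformed recursion so that the $z$-term can be seen to drop out.

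First I would dispose of the undeformed piece: at $z=0$ the weight is exactly $\wt_{Chev}$, so $\langle[w_0],B_0^N[e]\rangle=m_{Chev}(e,w_0)(\alpha_1,\dots,\alpha_{n-1})$, which is \eqref{eq:chev-weight-maximal-vars} by Stembridge (equivalently, the $M_k$ commute and this is the degree of the flag variety under the line bundle with first Chern class $\sum_k\alpha_k\mf{S}_{s_k}$). So everything reduces to showing that in the expansion of $(B_0+zC)^N$ every word containing at least one factor $C$ contributes $0$ to the coefficient $\langle[w_0],\,\cdot\,[e]\rangle$. Two easy observations handle the boundary words: a cover of $e$, and a cover into $w_0$, is always by a simple transposition, so $b_{v\lessdot w}=d_{v\lessdot w}=0$ there; hence $C[e]=0$ and $C^{*}[w_0]=0$, and any word with $C$ at either end already contributes $0$. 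Moreover, the map $v\mapsto w_0v$ followed by chain-reversal is a bijection on maximal chains $e\to w_0$ which fixes the transposition of each cover and interchanges the region-counts $b_{v\lessdot w}\leftrightarrow d_{v\lessdot w}$, hence replaces the weight of every cover by the same expression with $z\mapsto -z$; this already proves $m_{\wt}(e,w_0)$ is even in $z$.

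The heart of the matter is to promote this to full $z$-independence by handling the \emph{interior} occurrences of $C$, and this is exactly where the padded Schubert polynomials enter. The plan is to exhibit a unipotent, strictly Bruhat-raising operator $U=U_z$ with $U[e]=[e]$ and $U^{*}[w_0]=[w_0]$ and with $U^{-1}BU=B_0$; concretely $U$ is (up to normalization) the change of basis from the standard basis to the padded Schubert polynomials, and the identity $U^{-1}BU=B_0$ is precisely the assertion that, rewritten in the padded Schubert basis, the deformed Monk recursion loses its $z$-term. Granting such a $U$,
\[
\langle[w_0],B^N[e]\rangle=\langle[w_0],U B_0^N U^{-1}[e]\rangle=\langle U^{*}[w_0],\,B_0^N U^{-1}[e]\rangle=\langle[w_0],B_0^N[e]\rangle,
\]
and combining with the previous paragraph gives $m_{\wt}(e,w_0)=N!\prod_{k<\ell}\frac{\alpha_k+\cdots+\alpha_{\ell-1}}{\ell-k}$, independent of $z$. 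The main obstacle I anticipate is verifying $U^{-1}BU=B_0$ — equivalently, establishing the defining recursion of the padded Schubert polynomials and its compatibility with $b_{v\lessdot w}-d_{v\lessdot w}$. This is a combinatorial identity governing how the four region-counts $a,b,c,d$ of Figure~\ref{fig:matrix-weights} transform along two consecutive covers, and it is the step where the structure of padded Schubert polynomials is used in an essential way rather than formally. As a consistency check, in $\bs{S}_3$ the four maximal chains $e\to w_0$ contribute $\alpha_1^2\alpha_2$, $\alpha_1\alpha_2(\alpha_1+\alpha_2-z)$, $\alpha_1\alpha_2^2$, and $\alpha_1\alpha_2(\alpha_1+\alpha_2+z)$, the latter two being exchanged by the involution above, and they sum to $3\alpha_1\alpha_2(\alpha_1+\alpha_2)=3!\cdot\frac{\alpha_1}{1}\cdot\frac{\alpha_2}{1}\cdot\frac{\alpha_1+\alpha_2}{2}$, with the $z$'s cancelling in a pair.
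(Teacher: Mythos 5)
Your setup is sound and several of your observations are correct: the transfer-operator formulation, the reduction of the $z=0$ case to Stembridge's identity \eqref{eq:chev-weight-maximal-vars}, the vanishing $C[e]=0$ and $C^{*}[w_0]=0$, and the evenness of $m_{\wt}(e,w_0)$ in $z$ via $v\mapsto w_0v$ together with Proposition \ref{prop:symmetries}(2). But evenness is far from $z$-independence, and the step you yourself flag as the heart of the matter --- a unipotent, strictly Bruhat-raising $U$ with $U[e]=[e]$, $U^{*}[w_0]=[w_0]$ and $U^{-1}BU=B_0$ --- is not only unproved but cannot exist as described. If $U=I+V$ with $V$ strictly raising Bruhat length, then every term of $U^{-1}BU-B$ contains at least one factor of $V$ and one of $B$ and hence raises length by at least $2$, whereas $B-B_0=zC$ raises length by exactly $1$ and is nonzero; so no such $U$ conjugates $B$ to $B_0$. (A $U$ fixing every basis vector is also impossible: already in $\bs{S}_3$ one has $m_{\wt}(e,231)=2\alpha_1\alpha_2+\alpha_2^2+\alpha_2z$, so the intermediate weighted counts genuinely depend on $z$; only the top coefficient is $z$-free.) The padded Schubert polynomials do not furnish a change of basis intertwining $B$ with $B_0$; their role is the identity $\Delta\SS_w=\sum_u(1+2b_{w\lessdot u})\SS_u$ of Proposition \ref{prop:padded}, which lets one write your $C$ (the paper's $R$) as $\Delta$ minus multiplication by $(n-1)x_1+\cdots+x_{n-1}$ on the coinvariant algebra.

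The mechanism that actually kills the $z$-terms is different from conjugation and you would need to supply it. Realize $B_0$ as multiplication by $M=\sum_i\beta_ix_i$ with $\beta_i=\alpha_i+\cdots+\alpha_{n-1}$ (Monk's rule), and show that the iterated commutators $M_{k+1}=[M_k,R]$ are again multiplication operators, by $(k-1)!\sum_i\beta_ix_i^k$ (Lemma \ref{lem:MR-RM}); then show that any product $\prod_kM_k^{p_k}$ of total degree ${n\choose 2}$ other than $M^{{n\choose 2}}$ lies in the ideal $I$, via the divided-difference divisibility argument of Lemma \ref{lem:MS=0}. Expanding $(M+zR)^{{n\choose 2}}\cdot 1$ and pushing each $R$ rightward with these commutators, every $z$-term either ends in $R\cdot1=0$ or produces such a vanishing product, giving $(M+zR)^{{n\choose 2}}\cdot1=M^{{n\choose 2}}\cdot1$ and hence the theorem. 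Your $\bs{S}_3$ consistency check is correct, but the general argument is missing.
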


Theorem \ref{thm:generalized-macdonald} extends (\ref{eq:strong-macdonald}) to a one-parameter family of strong Bruhat analogues of Macdonald's identity.

\begin{theorem} \label{thm:generalized-macdonald}
Let $\wt: \Cov(\bs{S}_n) \to \Z[z]$ be defined by 
\[
\wt(v \lessdot w)=1+b_{v \lessdot w}(2-z) +c_{v \lessdot w}z.
\]
Then for any $w \in S_n$ we have 
\[
m_{\wt}(w,w_0)=\left({n\choose 2}-\ell(w)\right)!\cdot \mf{S}_w(1,...,1).
\]
In particular, $m_{\wt}(w,w_0)$ does not depend on $z$.
\end{theorem}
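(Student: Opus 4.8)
The plan is to reduce the theorem to a statement of $z$-independence and then invoke (\ref{eq:strong-macdonald}). Note that setting $z=0$ turns the weight $\wt(v\lessdot w)=1+b_{v\lessdot w}(2-z)+c_{v\lessdot w}z$ into the code weight $\wt_{code}(v\lessdot w)=1+2b_{v\lessdot w}$, so (\ref{eq:strong-macdonald}) already supplies the stated formula when $z=0$. Hence the whole content of the theorem is that $m_{\wt}(w,w_0)\in\Z[z]$ is a constant polynomial; I would organize the write-up so that this $z$-independence is the single thing to be proved, recovering the final formula by evaluation at $z=0$.

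To establish the $z$-independence I would use the padded Schubert polynomials. The idea is to encode the whole collection of weighted counts $m_{\wt}(w,v)$, for $v$ ranging over the interval $[w,w_0]$, in a single polynomial --- the padded Schubert polynomial $\mf{S}^{\mathrm{pad}}_w$ attached to $w$ (and to this weight family, so that its coefficients lie in $\Z[z]$) --- in such a way that $m_{\wt}(w,w_0)$ is recovered as a specialization or as the coefficient of a distinguished monomial. One would then show that $\mf{S}^{\mathrm{pad}}_w$ obeys a divided-difference-type recursion $\partial_a\mf{S}^{\mathrm{pad}}_w=\mf{S}^{\mathrm{pad}}_{ws_a}$ whenever $\ell(ws_a)<\ell(w)$, with an explicit base case $\mf{S}^{\mathrm{pad}}_{w_0}$. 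The crucial point of the "padding" is that, once the variables are set to the values appearing in the theorem, the $z$-dependent correction terms carried along by this recursion cancel, so that the recursion collapses onto the classical one $\partial_a\mf{S}_w=\mf{S}_{ws_a}$ for ordinary Schubert polynomials, decorated only by the elementary factor relating $(\binom{n}{2}-\ell(w))!$ and $(\binom{n}{2}-\ell(ws_a))!$. Induction on $\binom{n}{2}-\ell(w)$, with base case $w=w_0$ (where $m_{\wt}(w_0,w_0)=1=0!\cdot\mf{S}_{w_0}(1,\dots,1)$), then yields the formula --- and, a posteriori, the $z$-independence.

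A reformulation worth recording, which both motivates the relevant specialization and ties the theorem to Theorem~\ref{thm:any-pair}: the anti-automorphism $u\mapsto w_0u$ of $\bs{S}_n$ carries $[w,w_0]$ onto $[e,w_0w]$ and sends a covering relation with region data $(a,b,c,d)$ (in the notation of Definition~\ref{def:matrix-weights}) to one with region data $(a,d,c,b)$. Under the induced bijection on chains, $\wt$ becomes the weight $\wt'$ with $\wt'(u\lessdot u')=1+c_{u\lessdot u'}z+d_{u\lessdot u'}(2-z)$, whose variable pattern $\{z_A,z_B,z_C,z_D\}=\{0,0,z,2-z\}$ is precisely of the type appearing in Theorem~\ref{thm:any-pair}. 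Thus the theorem is equivalent to $m_{\wt'}(e,v)=\ell(v)!\cdot\mf{S}_{w_0v}(1,\dots,1)$ for every $v\in S_n$ --- the "enumeration to an arbitrary top element" refinement of the $w=e$ case treated in Theorem~\ref{thm:any-pair}. So the machinery behind that theorem should extend, the one genuinely new ingredient being the bookkeeping needed to terminate chains at a general $v$ rather than at $w_0$.

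The step I expect to be the main obstacle is proving the padded divided-difference recursion together with the cancellation of its $z$-terms under the specialization: this hinges on a careful local analysis of how the region counts $b_{u\lessdot u'},c_{u\lessdot u'},d_{u\lessdot u'}$ transform along the covering relations that are created or destroyed when one passes from chains into $w$ to chains into $ws_a$ --- that is, on understanding the rank-two Bruhat intervals involved and the moves relating the two families of chains. Once that local identity is in hand, the global statement follows formally by the induction above.
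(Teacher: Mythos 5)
Your reduction is sound as far as it goes: at $z=0$ the weight becomes the code weight $1+2b_{v\lessdot w}$, so (\ref{eq:strong-macdonald}) gives the claimed formula there, and the whole theorem would follow from $z$-independence of $m_{\wt}(w,w_0)$. But the mechanism you propose for the $z$-independence is not carried out and, as stated, does not exist in the paper's framework. The padded Schubert polynomials $\SS_w$ are fixed polynomials in $\bs{x},\bs{y}$, defined independently of any weight function; there is no ``padded Schubert polynomial attached to this weight family with coefficients in $\Z[z]$,'' and no divided-difference recursion $\partial_a\mf{S}^{\mathrm{pad}}_w=\mf{S}^{\mathrm{pad}}_{ws_a}$ with cancelling $z$-corrections is constructed or verified. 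You yourself flag this ``local analysis of rank-two intervals'' as the main obstacle, which is to say the key step of the proof is missing. Unwinding the obvious downward induction on $\ell(w)$, the $z$-independence is exactly equivalent to the single identity
\[
\sum_{u:\ w\lessdot u}\bigl(b_{w\lessdot u}-c_{w\lessdot u}\bigr)\,\mf{S}_u(1,\ldots,1)=0,
\]
and your proposal contains no argument for it.

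The paper proves precisely this identity (Lemma \ref{lem:gen-mac-notdepend}) by a short global argument rather than a local one: Macdonald's reduced-word identity read backwards gives $\mf{S}_u(1,\ldots,1)=\mf{S}_{u^{-1}}(1,\ldots,1)$, and Proposition \ref{prop:symmetries} gives $b_{w\lessdot u}=c_{w^{-1}\lessdot u^{-1}}$; applying the principal specialization of Proposition \ref{prop:padded} to both $w$ and $w^{-1}$ and subtracting yields the identity at once, after which the induction you outline goes through verbatim. Your observation that left multiplication by $w_0$ sends region data $(a,b,c,d)$ to $(a,d,c,b)$ and turns the theorem into a statement about $m_{\wt'}(e,v)$ is correct, but it only reformulates the problem (and in a form requiring chains up to an arbitrary $v$, which Theorem \ref{thm:any-pair} does not supply); it does not close the gap. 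To repair your write-up, replace the hypothetical padded divided-difference recursion with a proof of the displayed identity, e.g.\ via the inversion symmetry above.
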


\begin{ex} \label{ex:specializations}
Various specializations of the above Theorems give previously known results:
\begin{enumerate}
    \item Letting $z_B=2$ and $z_A=z_C=z_D=0$ in Theorem \ref{thm:any-pair} recovers (\ref{eq:code-weight-maximal}), while letting $z_B=z_D=1$ and $z_A=z_C=0$ recovers (\ref{eq:chev-weight-maximal}).
    
    \item Letting all $z=\alpha_1=\cdots =\alpha_{n-1}=1$ in Theorem \ref{thm:generalized-chevalley} the weight becomes:
    \begin{align*}
    \wt(v \lessdot w=vt_{ij})&=(j-i)+(b_{v\lessdot w} - d_{v \lessdot w})\\
    &=(b_{v\lessdot w} + d_{v \lessdot w} + 1)+ (b_{v\lessdot w} - d_{v \lessdot w})\\
    &=1+2b_{v \lessdot w}.
    \end{align*}
    This recovers the identity (\ref{eq:code-weight-maximal}) for the code weights.
    
    \item Letting $z=0$ in Theorem \ref{thm:generalized-chevalley} recovers Stembridge's identity (\ref{eq:chev-weight-maximal-vars}) for the Chevalley weights.
    
    \item Letting $z=0$ in Theorem \ref{thm:generalized-macdonald} recovers the strong order Macdonald identity (\ref{eq:strong-macdonald}).
\end{enumerate}
\end{ex}

Section \ref{sec:background} covers background and definitions.  Theorem \ref{thm:generalized-chevalley} is proven in Section \ref{sec:generalized-chevalley}. Section \ref{sec:generalized-macdonald} proves Theorem \ref{thm:generalized-macdonald} and deduces Theorem \ref{thm:any-pair} from Theorems \ref{thm:generalized-chevalley} and \ref{thm:generalized-macdonald}.

\section{Background and definitions} \label{sec:background}

\subsection{Bruhat order}

Let $s_1,...,s_{n-1}$ denote the adjacent transpositions in the symmetric group $S_n$.  For any permutation $w \in S_n$, its \emph{length} $\ell(w)$ is the minimal number of simple transpositions needed to write $w=s_{i_1} \cdots s_{i_{\ell}}$ as a product.  

The (strong) Bruhat order $\bs{S}_n=(S_n, \leq)$ is defined by its covering relations: $v \lessdot w$ whenever $w=vt_{ij}$ for some $i,j$ and $\ell(w)=\ell(v)+1$; this order relation encodes the containment of Schubert varieties in the flag variety.  The Bruhat order has unique minimal element the identity permutation $e$, and unique maximal element $w_0=n (n-1) ... 2 1$ of length ${n \choose 2}$, called the \emph{longest element}.  The Hasse diagram of $\bs{S}_3$ is shown in Figure \ref{fig:S3-weights}.  

It is well known that the maps $v \mapsto w_0v$ and $v \mapsto vw_0$ are antiautomorphisms of the Bruhat order $\bs{S}_n$ and that $v \mapsto v^{-1}$ is an automorphism \cite{bjorner-brenti}; Proposition \ref{prop:symmetries} determines the effect of these maps on the quantities $a,b,c,$ and $d$ from Definition \ref{def:matrix-weights}.

\begin{prop} \label{prop:symmetries}
Let $v \lessdot w$ be a covering relation in $\bs{S}_n$.
\begin{enumerate}
    \item $a_{v \lessdot w}=d_{v^{-1} \lessdot w^{-1}}$ and $b_{v \lessdot w}=c_{v^{-1} \lessdot w^{-1}}$,
    \item $b_{v \lessdot w}=d_{w_0w \lessdot w_0v}$, and 
    \item $a_{v \lessdot w}=c_{ww_0 \lessdot vw_0}$.
\end{enumerate}
\end{prop}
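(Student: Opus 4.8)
The plan is to prove all three parts by directly unwinding Definition~\ref{def:matrix-weights} in the permutation-matrix picture of Figure~\ref{fig:matrix-weights}, using that the three maps act very simply on the matrix of $v$ (whose dot in row $i$ sits in column $v_i$): the map $v\mapsto v^{-1}$ transposes it; $v\mapsto w_0 v$ negates the column index, $v_i\mapsto n+1-v_i$, a left--right reflection; and $v\mapsto v w_0$ negates the row index, a top--bottom reflection. Throughout I fix a cover $v\lessdot w=v t_{ij}$ with $i<j$, so that $v_i<v_j$ since the cover is upward, and I abbreviate $p:=v_i=w_j$ and $q:=v_j=w_i$. Recall that $a,b,c,d$ count the dots of $v$ lying, respectively, in rows $<i$ with value strictly between $v_i$ and $v_j$; in rows strictly between $i$ and $j$ with value $>v_j$; in rows $>j$ with value strictly between $v_i$ and $v_j$; and in rows strictly between $i$ and $j$ with value $<v_i$. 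Each of the three maps permutes these four region-descriptions, and the proposition records which permutation.

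For part~(1): one first checks the identity $w^{-1}=v^{-1}t_{pq}$, using $v t_{ij}v^{-1}=t_{pq}$, so the cover $v^{-1}\lessdot w^{-1}$ is recorded at the position pair $p<q$ (and indeed $(v^{-1})_p=i<j=(v^{-1})_q$). Then $a_{v^{-1}\lessdot w^{-1}}=\#\{k<p: i<(v^{-1})_k<j\}$ counts the values $k<p$ occurring in $v$ at a position strictly between $i$ and $j$; the bijection sending such a value to its position in $v$ turns this into $\#\{i<m<j: v_m<p\}=d_{v\lessdot w}$. Interchanging the roles of ``position'' and ``value'' throughout yields $b_{v^{-1}\lessdot w^{-1}}=c_{v\lessdot w}$ in the same way. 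Since $(v^{-1})^{-1}=v$, these two equalities are equivalent to the two claimed in part~(1).

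For part~(2): since $v\mapsto w_0 v$ is an antiautomorphism, the cover $v\lessdot w$ is sent to $w_0 w\lessdot w_0 v=(w_0 w)t_{ij}$, and Definition~\ref{def:matrix-weights} must be read off the \emph{smaller} element $w_0 w$. Its two distinguished dots are the images under $v_i\mapsto n+1-v_i$ of the white dots $(i,q)$ and $(j,p)$ of $w$, namely $(i,\,n+1-q)$ and $(j,\,n+1-p)$; since $n+1-q<n+1-p$ these lie in the configuration required by the definition, with the same index pair $i<j$, so $a,b,c,d$ are defined for this cover. The reflection fixes every row and swaps ``value $<v_i$'' with ``value $>v_j$'' inside the open interval of rows $(i,j)$, and a short computation gives $d_{w_0 w\lessdot w_0 v}=\#\{i<k<j: v_k>v_j\}=b_{v\lessdot w}$. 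Part~(3) is entirely parallel using the row-negating reflection $v\mapsto v w_0$: the distinguished pair now moves to the positions $n+1-j<n+1-i$, ``rows $<i$'' is swapped with ``rows $>j$'', and one obtains $c_{w w_0\lessdot v w_0}=a_{v\lessdot w}$. Alternatively, (3) can be deduced formally from (1) and (2) through the chain $c_{w w_0\lessdot v w_0}=b_{w_0 w^{-1}\lessdot w_0 v^{-1}}=d_{v^{-1}\lessdot w^{-1}}=a_{v\lessdot w}$.

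I do not expect a genuine obstacle here; the one place requiring real care — and the step I would write out most explicitly — is the bookkeeping just indicated for (2) and (3). Because $v\mapsto w_0 v$ and $v\mapsto v w_0$ reverse the order, Definition~\ref{def:matrix-weights} has to be applied to the image of the \emph{larger} element $w$ (not of $v$), one must check that its two distinguished dots still form an upper-left/lower-right pair indexed by some $i'<j'$ (here $i'=i,\ j'=j$ for $w_0\cdot$, and $i'=n+1-j,\ j'=n+1-i$ for $\cdot\,w_0$), and one must match correctly which of the four regions the relevant reflection carries to which. Everything else is a mechanical rewriting of the defining set-builder conditions.
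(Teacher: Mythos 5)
Your proof is correct and follows exactly the same approach as the paper, which simply observes that inversion, left multiplication by $w_0$, and right multiplication by $w_0$ act on permutation matrices as the diagonal, vertical, and horizontal reflections respectively and reads off the region correspondences from Figure \ref{fig:matrix-weights}. You have merely carried out in full the bookkeeping (identifying the transposition and index pair for the image cover, and checking the upward-cover condition) that the paper declares ``clear.''
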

\begin{proof}
These are clear from Figure \ref{fig:matrix-weights} after observing that inversion corresponds to reflecting the permutation matrix across the main (top-left to bottom-right) diagonal, that left multiplication by $w_0$ corresponds to reflecting across the vertical axis, and that right multiplication by $w_0$ corresponds to reflecting across the horizontal axis.
\end{proof}

\subsection{Schubert polynomials and padded Schubert polynomials}

For $w \in S_n$ the \emph{Schubert polynomials} $\mf{S}_w(x_1,...,x_n)$, introduced by Lascoux and Sch\"{u}tzenberger \cite{Lascoux-Schutzenberger}, represent the classes of Schubert varieties in the cohomology $H^*(G/B)$ of the flag variety.  They can be defined recursively as follows:
\begin{itemize}
    \item $\mf{S}_{w_0}(x_1,...,x_n)=x_1^{n-1}x_2^{n-1} \cdots x_{n-2}^2 x_1=\bs{x}^{\rho}$, where $\rho=(n-1,n-2,...,1)$ denotes the staircase composition, and 
    \item $\mf{S}_{ws_i}=N_i \cdot \mf{S}_w$ when $\ell(ws_i)<\ell(w)$.
\end{itemize}
Here $N_i$ denotes the $i$-th \emph{Newton divided difference operator}:
\[
N_i \cdot g(x_1,...,x_n) := \frac{g(x_1,...,x_n) - g(x_1,...,x_{i+1}, x_i,...,x_n)}{x_i-x_{i+1}}.
\]
The Schubert polynomials $\{\mf{S}_w\}_{w \in S_n}$ form a basis for the vector space $V_n=\spn_{\Q} \{\bs{x}^{\gamma} \: | \: \gamma \leq \rho\}$, where here $\leq$ denotes component-wise comparison.  Let $\tilde{V}_n=\spn_{\Q}\{\bs{x}^{\gamma}\bs{y}^{\rho-\gamma} \: | \: \gamma \leq \rho\}$, then the \emph{padded Schubert polynomials} $\tilde{\mf{S}}_w$, introduced in \cite{duality-paper}, are defined as the images of the $\mf{S}_w$ under the natural map $\bs{x}^{\gamma} \mapsto \bs{x}^{\gamma}\bs{y}^{\rho-\gamma}$ from $V_n \to \tilde{V}_n$.  Define a differential operator $\Delta: \tilde{V}_n \to \tilde{V}_n$ by
\[
\Delta = \sum_{i=1}^n x_i \frac{\partial}{\partial y_i}.
\]
\begin{prop}[\cite{duality-paper}] \label{prop:padded}
For any $w \in S_n$ we have:
$$\Delta\SS_w=\sum_{u:\ w\lessdot u}(1+2b_{w\lessdot u})\SS_{u}.$$
\end{prop}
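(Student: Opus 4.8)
The plan is to transfer the identity to the polynomial ring $V_n$ of ordinary Schubert polynomials, prove the resulting operator identity by a downward induction on length using the divided difference operators $N_i$, and reduce the inductive step to a finite combinatorial check.

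\textbf{Step 1 (reduction to $V_n$).} Since $\SS_w$ is the image of $\mf{S}_w$ under the linear isomorphism $\pi\colon V_n\to\tilde{V}_n$ with $\pi(\bs{x}^\gamma)=\bs{x}^\gamma\bs{y}^{\rho-\gamma}$, a one-line computation gives $\Delta\circ\pi=\pi\circ D$, where $D$ is the operator on $V_n$ defined by $D(\bs{x}^\gamma)=\sum_i(\rho_i-\gamma_i)\,x_i\bs{x}^\gamma$. Hence the Proposition is equivalent to $D\mf{S}_w=\sum_{w\lessdot u}(1+2b_{w\lessdot u})\mf{S}_u$ in $V_n$. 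Writing $D=M-T$, where $M$ is multiplication by $\sum_{i=1}^{n-1}(n-i)x_i=\sum_{m=1}^{n-1}\mf{S}_{s_m}$ and $T=\sum_k x_k^2\partial_{x_k}$, exhibits the two tools to be used: Monk's rule (for the multiplication operator $M$) and the divided difference recursion defining $\mf{S}_w$.

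\textbf{Step 2 (commutation relation).} Since $s_i$ changes the linear form $\sum(n-i)x_i$ only by subtracting $x_i-x_{i+1}$, a short computation gives $MN_i=N_iM-s_i$; since $T$ is a derivation that commutes with $s_i$ and satisfies $T\bigl((x_i-x_{i+1})^{-1}\bigr)=-(x_i+x_{i+1})(x_i-x_{i+1})^{-1}$, a short computation gives $TN_i=N_iT-(x_i+x_{i+1})N_i$. Subtracting, one obtains the operator identity
\[
DN_i=N_iD+(x_i+x_{i+1})N_i-s_i .
\]

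\textbf{Step 3 (downward induction on $\ell(w)$).} For $w=w_0$ both sides of $D\mf{S}_w=\sum_{w\lessdot u}(1+2b_{w\lessdot u})\mf{S}_u$ vanish. For $v\neq w_0$, choose an ascent $i$ of $v$ and set $\omega=vs_i$, so that $\ell(\omega)=\ell(v)+1$ and $\mf{S}_v=N_i\mf{S}_\omega$. Applying the commutation relation to $\mf{S}_\omega$, invoking the inductive hypothesis for $\omega$, and using $s_i\mf{S}_\omega=\mf{S}_\omega-(x_i-x_{i+1})\mf{S}_v$ together with $N_i\mf{S}_U=\mf{S}_{Us_i}$ or $0$ (according as $i$ is or is not a descent of $U$), one gets
\[
D\mf{S}_v=\sum_{\substack{\omega\lessdot U\\ \ell(Us_i)<\ell(U)}}(1+2b_{\omega\lessdot U})\,\mf{S}_{Us_i}\ +\ 2x_i\mf{S}_v\ -\ \mf{S}_\omega .
\]
Expanding $2x_i\mf{S}_v$ by Monk's rule and observing that no $\mf{S}$ indexed outside $S_n$ appears (because $v_i<v_{i+1}$), every term on the right is a multiple of $\mf{S}_{U'}$ for some cover $v\lessdot U'$; in the first sum, writing $U=\omega t$, one has $Us_i=v(s_it s_i)$, which differs from $v$ by a transposition and has length $\ell(v)+1$.

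\textbf{Step 4 (the combinatorial identity).} It remains to check that the right-hand side equals $\sum_{v\lessdot U'}(1+2b_{v\lessdot U'})\mf{S}_{U'}$, i.e.\ to match the coefficient of $\mf{S}_{U'}$ for each cover $v\lessdot U'=vt_{pq}$. I would split into cases according to the position of $\{p,q\}$ relative to $\{i,i+1\}$. In each case one reads off whether $i$ is an ascent of $U'$ (and hence whether $U'$ occurs in the first sum, with $U=U's_i$), identifies $U$ explicitly as $\omega$ times a transposition, and checks that $b_{\omega\lessdot U}$ exceeds $b_{v\lessdot U'}$ by exactly the amount needed to absorb the contributions of $2x_i\mf{S}_v$ and $-\mf{S}_\omega$. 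The cover conditions ($v_p<v_q$, with no value of $v$ lying between them in the intervening positions) together with $b_{v\lessdot w}+d_{v\lessdot w}=j-i-1$ serve to rule out the configurations that cannot arise. This case analysis is the only real content of the argument, and it is where I expect the main difficulty to lie; everything else is formal.
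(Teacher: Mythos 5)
First, note that the paper itself offers no proof of this proposition: it is quoted verbatim from \cite{duality-paper}, so there is no internal argument to compare yours against. Your strategy --- transporting $\Delta$ to the operator $D=M-T$ on $V_n$, establishing the commutation relation $DN_i=N_iD+(x_i+x_{i+1})N_i-s_i$, and running a downward induction on length --- is a legitimate self-contained route, and Steps 1--3 are correct as written: the identity $\Delta\circ\pi=\pi\circ D$, the two commutators (using $s_iL=L-(x_i-x_{i+1})$ for $L=\sum_j(n-j)x_j$ and $T((x_i-x_{i+1})^{-1})=-(x_i+x_{i+1})(x_i-x_{i+1})^{-1}$), and the displayed formula for $D\mf{S}_v$ all check out, as does the observation that every $U=\omega t$ with a descent at $i$ yields a cover $v\lessdot Us_i$ by the length count.

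The gap is that Step 4 --- which you yourself identify as ``the only real content of the argument'' --- is announced but never carried out. A proof cannot end with ``one checks that $b_{\omega\lessdot U}$ exceeds $b_{v\lessdot U'}$ by exactly the amount needed''; that comparison of $b$-statistics across the reflection $t\mapsto s_its_i$ is precisely where the specific form of the weight $1+2b$ enters, and nothing in Steps 1--3 guarantees it. For the record, the case analysis does close, so the defect is one of execution rather than of idea: writing $U'=vt_{pq}$, if $\{p,q\}\cap\{i,i+1\}=\emptyset$ then $i$ is an ascent of $U'$, $b_{\omega\lessdot U}=b_{v\lessdot U'}$ (positions $i,i+1$ enter or avoid the window $(p,q)$ together), and Monk contributes nothing; if $(p,q)=(i,i+1)$ the first sum is empty and $2-1=1=1+2b_{v\lessdot U'}$; if $p=i<i+1<q$ the cover condition forces $v_{i+1}>v_q$, so $b_{\omega\lessdot U}=b_{v\lessdot U'}-1$ and the $+2$ from $2x_i\mf{S}_v$ compensates; if $q=i$ then $b_{\omega\lessdot U}=b_{v\lessdot U'}+1$ against the $-2$ from Monk; and the cases $q=i+1$ and $p=i+1$ give $b_{\omega\lessdot U}=b_{v\lessdot U'}$ with no Monk contribution. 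You should also say explicitly how you are handling the fact that Monk's rule is only an identity modulo $I$: either verify (as your parenthetical hints) that the ascent at $i$ excludes all covers $vt_{ik}$ with $k>n$, so the transposition form of Monk's rule holds exactly in $\C[x_1,\ldots,x_n]$, or work modulo $I$ throughout and invoke the injectivity of $V_n\to\C[x_1,\ldots,x_n]/I$ to recover the polynomial identity.
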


We will also need Macdonald's well-known reduced word identity for the number of monomials in $\mf{S}_w$:

\begin{prop}[\cite{Macdonald}]
\label{prop:original-macdonald}
\[
\mf{S}_w(1,...,1)=\frac{1}{\ell(w)!} \sum_{s_{a_1}\cdots s_{a_{\ell(w)}} = w} a_1 \cdots a_{\ell(w)}.
\]
\end{prop}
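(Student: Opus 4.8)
The plan is to prove the identity by induction on $\ell(w)$, matching the right-hand side of the asserted formula against an analogous recursion for the principal specialization $\mf{S}_w(1,\dots,1)$. Write $R_w := \sum_{s_{a_1}\cdots s_{a_{\ell(w)}} = w} a_1\cdots a_{\ell(w)}$ for the numerator on the right, the sum being over reduced words of $w$. Sorting the reduced words of $w$ by their last letter $i$ --- which occurs precisely when $\ell(ws_i) < \ell(w)$, in which case the preceding letters form an arbitrary reduced word of $ws_i$ --- gives
\[
R_w = \sum_{i\,:\,\ell(ws_i) < \ell(w)} i \cdot R_{ws_i}, \qquad R_e = 1.
\]
Since $\mf{S}_e = 1$ and $0! = 1$, it therefore suffices to establish the twin recursion
\[
\ell(w) \cdot \mf{S}_w(1,\dots,1) = \sum_{i\,:\,\ell(ws_i) < \ell(w)} i \cdot \mf{S}_{ws_i}(1,\dots,1),
\]
after which the proposition follows by a routine induction with base case $w = e$.

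The key lemma is a formula for the action of the divided difference operator $N_i$ on the principal specialization: for any polynomial $g(x_1,\dots,x_n)$,
\[
(N_i g)(1,\dots,1) = \left.\left(\frac{\partial g}{\partial x_i} - \frac{\partial g}{\partial x_{i+1}}\right)\right|_{x_1 = \cdots = x_n = 1}.
\]
To see this, set $x_k = 1$ for all $k \neq i, i+1$ and write $h(a,b)$ for the resulting polynomial in $a = x_i$ and $b = x_{i+1}$; then $N_i g$ specializes to $\bigl(h(a,b) - h(b,a)\bigr)/(a-b)$, which is again a polynomial, so its value at $a = b = 1$ equals the limit as $(a,b)\to(1,1)$, and a first-order Taylor expansion of $h$ about $(1,1)$ identifies that limit as $\partial h/\partial a - \partial h/\partial b$ evaluated at $(1,1)$, which is the asserted difference of partials of $g$ at $x=1$. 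Applying the lemma with $g = \mf{S}_w$: if $\ell(ws_i) < \ell(w)$ then $N_i\mf{S}_w = \mf{S}_{ws_i}$ by the defining recursion, while if $\ell(ws_i) > \ell(w)$ then $N_i \mf{S}_w = 0$ (a standard property of Schubert polynomials), which forces $\mf{S}_w$ to be symmetric in $x_i, x_{i+1}$ so that both sides of the formula vanish. Hence
\[
\sum_{i\,:\,\ell(ws_i) < \ell(w)} i \cdot \mf{S}_{ws_i}(1,\dots,1) = \sum_{i=1}^{n-1} i\left.\left(\frac{\partial \mf{S}_w}{\partial x_i} - \frac{\partial \mf{S}_w}{\partial x_{i+1}}\right)\right|_{x=1}.
\]

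To finish I would evaluate the last sum by Abel summation (telescoping): it equals $\sum_{j=1}^{n-1}\bigl(\partial\mf{S}_w/\partial x_j\bigr)\big|_{x=1} - (n-1)\bigl(\partial\mf{S}_w/\partial x_n\bigr)\big|_{x=1}$. The final term vanishes because $\mf{S}_w \in \Z[x_1,\dots,x_{n-1}]$ for $w \in S_n$ --- indeed $\mf{S}_{w_0} = \bs{x}^\rho$ involves no $x_n$ and the operators $N_1,\dots,N_{n-1}$ preserve $\Z[x_1,\dots,x_{n-1}]$ --- while Euler's relation for the homogeneous polynomial $\mf{S}_w$ of degree $\ell(w)$ gives $\sum_{j=1}^{n}\bigl(\partial\mf{S}_w/\partial x_j\bigr)\big|_{x=1} = \ell(w)\,\mf{S}_w(1,\dots,1)$. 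Combining the two displays produces the twin recursion, completing the induction. The only genuinely delicate point is the $\tfrac{0}{0}$ evaluation $(N_i g)(1,\dots,1)$; the rest is bookkeeping with Euler's relation and a telescoping sum. One could alternatively repackage the argument by introducing the operator $\sum_{i=1}^{n-1} i\,(x_i - x_{i+1})N_i$ and checking that, after principal specialization, it acts as multiplication by the degree --- but the Taylor-expansion step above is needed in any case.
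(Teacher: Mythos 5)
The paper does not prove this proposition at all --- it is quoted from Macdonald as a known identity --- so there is no in-paper argument to compare against; your proposal has to stand on its own, and it essentially does. The skeleton is sound: the recursion $R_w=\sum_{i:\ell(ws_i)<\ell(w)} i\cdot R_{ws_i}$ obtained by stripping the last letter of a reduced word is correct, the reduction to the twin recursion $\ell(w)\,\mf{S}_w(1,\dots,1)=\sum_{i:\ell(ws_i)<\ell(w)} i\,\mf{S}_{ws_i}(1,\dots,1)$ is correct, and the key lemma $(N_i g)(1,\dots,1)=\bigl(\partial g/\partial x_i-\partial g/\partial x_{i+1}\bigr)\big|_{x=1}$ is correctly stated and correctly justified (the antisymmetrization $h(a,b)-h(b,a)$ is divisible by $a-b$, and the value of the quotient at $(1,1)$ is the limit along $b=1$, which Taylor expansion identifies as $h_a(1,1)-h_b(1,1)$). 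The case split on whether $i$ is a descent of $w$, the Abel summation, and the use of Euler's relation all check out. This is one of the standard proofs of Macdonald's identity (via the ``Euler operator'' $\sum_i i(x_i-x_{i+1})N_i$), and it is a legitimately complete argument.

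One small imprecision you should repair: the parenthetical claim that ``the operators $N_1,\dots,N_{n-1}$ preserve $\Z[x_1,\dots,x_{n-1}]$'' is false as a statement about arbitrary polynomials --- for instance $N_{n-1}(x_{n-1}^2)=x_{n-1}+x_n$. What you actually need is only that $\mf{S}_w$ involves no $x_n$ for $w\in S_n$, so that $\partial\mf{S}_w/\partial x_n=0$. This is true and standard, and in the present paper it is immediate from the stated fact that $\{\mf{S}_w\}_{w\in S_n}$ is a basis of $V_n=\spn_{\Q}\{\bs{x}^{\gamma}\,|\,\gamma\leq\rho\}$ with $\rho=(n-1,\dots,1,0)$, whose monomials have $x_n$-exponent zero. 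Cite that instead of the false closure property and the argument is airtight.
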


\section{Proof of Theorem \ref{thm:generalized-chevalley}}
\label{sec:generalized-chevalley}
We will modify a proof idea for (\ref{eq:chev-weight-maximal-vars}) due to Stanley \cite{Stanley-Bruhat}. Let us define some linear operators on the \emph{coinvariant algebra} $\C[x_1,\ldots,x_n]/I$, where $I$ is the ideal generated by all symmetric polynomials in $x_1,\ldots,x_n$ with vanishing constant terms. The core of the argument comes from interpreting the operator $\Delta$ with respect to two different bases of $\C[x_1,\ldots, x_n]/I$: one is $\{\S_w|w\in S_n\}$ and the other one is $\{\bs{x}^{\gamma}|\gamma\leq\rho\}$.

Recall that we have defined $\Delta$ on $\tilde V_n$. We can define it naturally on $V_n$ since $V_n\rightarrow\tilde V_n$ is an isomorphism. Namely, it can be seen from the definition that $\Delta x^{\gamma}=\big(\sum_{i=1}^{n}(n-i-\gamma_i)x_i\big)x^{\gamma}$ for $\gamma\leq\rho$ (this operator $\Delta$ appears implicitly in \cite{Hamaker}). Moreover, we can extend this definition of $\Delta$ to $\C[x_1,\ldots,x_n]$ by the same formula. We claim that such definition is in fact well-defined on the quotient $\C[x_1,\ldots,x_n]/I$. This is formulated in the following technical lemma, which is necessary for the correctness of the main proof but is not related to the key idea of the proof.
\begin{lemma}\label{lem:Delta-well-defined}
The linear operator $\Delta:x^{\gamma}\mapsto\big(\sum_{i=1}^{n}(n-i-\gamma_i)x_i\big)x^{\gamma}$ is well-defined on $\C[x_1,\ldots,x_n]/I$ and coincides with $\sum_{i=1}^nx_i\frac{\partial}{\partial y_i}$ on $\tilde V_n$.
\end{lemma}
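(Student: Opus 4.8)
The plan is to exhibit the formula-defined operator $\Delta$ on $\C[x_1,\dots,x_n]$ as a difference of two operators, each of which manifestly respects the ideal $I$. Set $L:=\sum_{i=1}^n(n-i)x_i$ and $E:=\sum_{i=1}^n x_i^2\frac{\partial}{\partial x_i}$, and let $m_L$ denote multiplication by the linear form $L$. First I would check that $\Delta=m_L-E$ as operators on $\C[x_1,\dots,x_n]$: on a monomial $x^\gamma$ one has $E\,x^\gamma=\big(\sum_i\gamma_i x_i\big)x^\gamma$, so $(m_L-E)x^\gamma=\big(\sum_i(n-i-\gamma_i)x_i\big)x^\gamma=\Delta\,x^\gamma$, and since all three operators are linear this identity holds on the whole polynomial ring. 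Because $I$ is an ideal, $m_L(I)\subseteq I$ automatically, so well-definedness of $\Delta$ on $\C[x_1,\dots,x_n]/I$ reduces entirely to showing that $E(I)\subseteq I$.

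The key point is that $E$, unlike $\Delta$, is a \emph{derivation} of $\C[x_1,\dots,x_n]$: each summand $x_i^2\frac{\partial}{\partial x_i}$ is a derivation, hence so is their sum. Moreover $E$ is $S_n$-equivariant, since $\sum_i x_i^2\frac{\partial}{\partial x_i}$ is symmetric in the variables, so $E$ carries symmetric polynomials to symmetric polynomials; and since $E$ maps a homogeneous polynomial of degree $d$ to one of degree $d+1$, every $E(f)$ has zero constant term. Hence $E$ carries the space $\Lambda^+$ of symmetric polynomials with vanishing constant term into itself. Now $I$ is, by definition, the ideal generated by $\Lambda^+$, so any $f\in I$ may be written $f=\sum_j h_j\sigma_j$ with $h_j\in\C[x_1,\dots,x_n]$ and $\sigma_j\in\Lambda^+$; the derivation property then gives $E(f)=\sum_j\big(h_j E(\sigma_j)+\sigma_j E(h_j)\big)$, which lies in $I$ because $E(\sigma_j)\in\Lambda^+\subseteq I$ and $\sigma_j\in I$. (Alternatively one may verify directly, using the standard generators $e_1,\dots,e_n$ of $I$, that $E(e_k)=e_1e_k-(k+1)e_{k+1}\in I$.) Thus $E(I)\subseteq I$, and combined with $m_L(I)\subseteq I$ this yields $\Delta(I)\subseteq I$, so $\Delta$ descends to a well-defined operator on the coinvariant algebra.

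It then remains to identify this operator with $\sum_i x_i\frac{\partial}{\partial y_i}$ on $\tilde V_n$, which is the direct computation already indicated before the lemma. The spaces $V_n$, $\tilde V_n$, and $\C[x_1,\dots,x_n]/I$ are canonically identified with one another: $V_n\to\tilde V_n$ via $x^\gamma\mapsto x^\gamma y^{\rho-\gamma}$, and $V_n\to\C[x_1,\dots,x_n]/I$ via the classical fact that the staircase monomials $\{x^\gamma:\gamma\leq\rho\}$ descend to a basis of the coinvariant algebra. Under these identifications, $\sum_i x_i\frac{\partial}{\partial y_i}$ sends $x^\gamma y^{\rho-\gamma}$ to $\sum_i(n-i-\gamma_i)\,x^{\gamma+\bs{e}_i}y^{\rho-\gamma-\bs{e}_i}$, where $\bs{e}_i$ denotes the $i$th standard basis vector and the terms with $\gamma_i=n-i$ cause no trouble because their coefficient vanishes; this is exactly the image of $\Delta\,x^\gamma=\big(\sum_i(n-i-\gamma_i)x_i\big)x^\gamma$ under $V_n\xrightarrow{\sim}\tilde V_n$, so the two operators agree. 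The only step that takes a moment's care is the claim $E(\Lambda^+)\subseteq I$; beyond that there is no real obstacle, the content of the lemma being simply the recognition that $\Delta=m_L-E$ with $m_L$ a multiplication operator and $E$ a derivation, both of which preserve $I$.
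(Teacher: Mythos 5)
Your proof is correct, but it is organized differently from the paper's. The paper works in the padded setting: it replaces each monomial $x^\gamma$ by $x^\gamma y^{\rho-\gamma}$ (allowing negative $y$-exponents), uses the fact that $\sum_i x_i\frac{\partial}{\partial y_i}$ is a derivation there to reduce to generators of $I$, chooses the power sums $p_k$ as generators, and computes $\Delta p_k$ explicitly to land back in $I$. You instead stay entirely in $\C[x_1,\ldots,x_n]$ and split $\Delta=m_L-E$ with $L=\sum_i(n-i)x_i$ and $E=\sum_i x_i^2\frac{\partial}{\partial x_i}$: the multiplication operator $m_L$ preserves $I$ for free, and $E$ is an honest $S_n$-equivariant, degree-raising derivation, so it sends $\Lambda^+$ into $\Lambda^+$ and hence $I$ into $I$ with no computation on specific generators at all (your optional check $E(e_k)=e_1e_k-(k+1)e_{k+1}$ is the analogue of the paper's power-sum computation). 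Your decomposition also makes precise a point the paper states loosely: the unpadded $\Delta$ is not itself a derivation --- it satisfies $\Delta(fg)=f\Delta(g)+g\Delta(f)-Lfg$ --- and isolating the genuine derivation $E$ is exactly what makes the reduction to generators transparent. What the paper's padded computation buys is continuity with the formalism used throughout (the operator $\sum_i x_i\frac{\partial}{\partial y_i}$ on $\tilde V_n$ is the object of real interest); what your argument buys is a shorter, more conceptual proof of well-definedness that needs no explicit symmetric-function identity. Your final identification of the two operators on $\tilde V_n$, including the observation that the coefficient $n-i-\gamma_i$ vanishes precisely when a negative $y$-exponent would otherwise appear, is the same direct check the paper performs implicitly.
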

\begin{proof}
We need to check that if $f\in I$, then $\Delta f\in I$. For convenience, we will first pad every monomial $x^{\gamma}$ to $x^{\gamma}y^{\rho-\gamma}$, allowing negative exponents on $y$-variables, so that we can use $\Delta=\sum_i\frac{\partial}{\partial y_i}x_i$, and then specialize $y_i$'s to 1. This is compatible with the definition as in the statement of the lemma. This means $\Delta(fg)=f\Delta(g)+g\Delta(f)$. As a result, it suffices to check if $f$ is a generator of $I$, then $\Delta f\in I$.

Let us pick the power sum symmetric functions $f=x_1^k+\cdots+x_n^k$ as generators, for $k\geq1$. After padding, we get $\sum_j(\frac{x_j}{y_j})^ky^\rho.$ Then
\begin{align*}
\Delta\left(\sum_{j=1}^n\left(\frac{x_j}{y_j}\right)^k y^\rho\right)=&\left(\sum_{i=1}^n\frac{\partial}{\partial y_i}x_i\right)\left(\sum_{j=1}^n\left(\frac{x_j}{y_j}\right)^k y^\rho\right)\\
=&\sum_{i,j=1}^nx_i\frac{x_j^k}{y_j^k}\left(\frac{\partial}{\partial y_i}y^\rho\right)+\sum_{i=1}^n
x_iy^\rho\left(\frac{\partial}{\partial y_i}\frac{x_i^k}{y_i^k}\right)\\
=&\left(\sum_{i=1}^nx_i\frac{\partial}{\partial y_i}y^\rho\right)\left(\sum_{j=1}^n\frac{x_j^k}{y_j^k}\right)-(k+1)y^\rho\sum_{i=1}^n\frac{x_i^{k+1}}{y_i^{k+1}}.
\end{align*}
It is clear that both terms belong to $I$ after specializing $y_i$'s to 1. So we are done.
\end{proof}

We will also make use of the following classical identity, called Monk's rule (or Monk's formula).

\begin{prop}[Monk's rule, see e.g. \cite{Manivel}]
\label{prop:monks-rule}
For any $1 \leq m < n$, the identity 
\[
(x_1+\cdots + x_m)\mf{S}_w = \sum_{\substack{j \leq m < k \\ \ell(wt_{jk})=\ell(w)+1}} \mf{S}_{wt_{jk}}
\]
holds in $\C[x_1,\ldots,x_n]/I$.
\end{prop}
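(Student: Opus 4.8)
The plan is to prove Monk's rule by downward induction on $\ell(w)$, with base case $w=w_0$, using the divided-difference recursion $\mf{S}_{ws_i}=N_i\mf{S}_w$ (valid whenever $\ell(ws_i)<\ell(w)$) that defines the Schubert polynomials, together with the Leibniz rule $N_i(fg)=(N_if)\cdot g+(s_i\cdot f)\cdot(N_ig)$ for divided differences, where $s_i$ acts by interchanging $x_i$ and $x_{i+1}$. Since $N_i$ preserves the ideal $I$, everything descends to $\C[x_1,\ldots,x_n]/I$, where all computations take place.

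For the base case $w=w_0$, the right-hand side is the empty sum, because no transposition can raise the length past $\binom{n}{2}=\ell(w_0)$. The left-hand side is $(x_1+\cdots+x_m)\bs{x}^{\rho}$, homogeneous of degree $\binom{n}{2}+1$; as the top nonzero graded piece of the coinvariant algebra sits in degree $\binom{n}{2}$ and is spanned by the image of $\bs{x}^{\rho}=\mf{S}_{w_0}$, this element is $0$, so the two sides agree.

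For the inductive step I would take $w'\neq w_0$, choose $i$ with $\ell(w's_i)=\ell(w')+1$ (such $i$ exists since $w'\neq w_0$), and set $w:=w's_i$, so that $\mf{S}_{w'}=N_i\mf{S}_w$ and Monk's rule holds for $w$ by induction. Apply $N_i$ to the identity for $w$. On the left, when $i\neq m$ the polynomial $x_1+\cdots+x_m$ is symmetric in $x_i,x_{i+1}$, so Leibniz collapses to $N_i\big((x_1+\cdots+x_m)\mf{S}_w\big)=(x_1+\cdots+x_m)\mf{S}_{w'}$; when $i=m$ one additionally picks up the summand $\mf{S}_w$ and a shift replacing $x_m$ by $x_{m+1}$, a case I would handle by a secondary induction on $m$ via $(x_1+\cdots+x_{m-1})+(x_1+\cdots+x_{m+1})=2(x_1+\cdots+x_m)-x_m+x_{m+1}$ (with the boundary values $m=1$ and $m=n-1$, where $x_1+\cdots+x_n\equiv 0$, treated separately). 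On the right, $N_i\mf{S}_{wt_{jk}}$ is $\mf{S}_{wt_{jk}s_i}$ if right multiplication by $s_i$ lowers the length and $0$ otherwise, and $s_it_{jk}s_i=t_{s_i(j)\,s_i(k)}$ rewrites $wt_{jk}s_i=w't_{s_i(j)\,s_i(k)}$.

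The crux, and the step I expect to be the main obstacle, is then to check that after this rewriting the surviving summands on the right are precisely $\{\mf{S}_{w't_{j'k'}} : j'\le m<k',\ \ell(w't_{j'k'})=\ell(w')+1\}$, each with coefficient one. This is a purely Bruhat-order bookkeeping statement about how the covering relations $w\lessdot wt_{jk}$ with $j\le m<k$ transform under right multiplication by $s_i$, and I would isolate it as a lemma comparing the two index sets through the bijection $t_{jk}\mapsto t_{s_i(j)\,s_i(k)}$ on transpositions — essentially a lifting/exchange argument. Since Monk's rule is classical, one could instead substitute the geometric Chevalley formula in $H^*(G/B)$, or Lascoux--Sch\"{u}tzenberger's original derivation, in place of this bookkeeping; but the divided-difference induction above is the route most consistent with the operators already introduced in this section.
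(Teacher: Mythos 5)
First, a point of comparison: the paper does not prove this proposition at all --- it is quoted as a classical result with a pointer to \cite{Manivel} --- so there is no in-paper argument to measure your proposal against. Judged on its own, your strategy (downward induction on $\ell(w)$ via the divided-difference recursion and the Leibniz rule $N_i(fg)=(N_if)g+(s_if)(N_ig)$) is a genuine classical route to Monk's rule, and your base case and the $i\neq m$ half of the inductive step are correct. But the proposal has two real gaps. The first is the step you yourself call the crux: that after applying $N_i$, the surviving terms $N_i\mf{S}_{wt_{jk}}=\mf{S}_{w't_{s_i(j)s_i(k)}}$ on the right are precisely the covers $w'\lessdot w't_{j'k'}$ with $j'\leq m<k'$, each occurring exactly once. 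This is not peripheral bookkeeping --- it is essentially the entire content of Monk's rule, and naming it as ``a lifting/exchange lemma'' without proving it leaves the argument incomplete exactly where all the work lives.

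The second gap is the $i=m$ case, which as sketched does not go through. Leibniz gives $N_m\bigl((x_1+\cdots+x_m)\mf{S}_w\bigr)=\mf{S}_w+(x_1+\cdots+x_{m-1}+x_{m+1})\mf{S}_{w'}$; the term $\mf{S}_w=\mf{S}_{w't_{m,m+1}}$ is one of the desired summands, but the linear form $x_1+\cdots+x_{m-1}+x_{m+1}$ is not of the form $x_1+\cdots+x_{m'}$, and rewriting it via your identity expresses it through the unknown instances of Monk's rule for $m-1$, $m$, and $m+1$ at the \emph{same} length $\ell(w')$ simultaneously, so the proposed ``secondary induction on $m$'' is not well-founded. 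Nor can you always dodge this case by choosing $i\neq m$: a permutation $w'$ may have its unique ascent at position $m$. The standard repair is to run the same induction on the single-variable statement
\[
x_r\mf{S}_w=-\sum_{\substack{j<r\\ \ell(wt_{jr})=\ell(w)+1}}\mf{S}_{wt_{jr}}+\sum_{\substack{k>r\\ \ell(wt_{rk})=\ell(w)+1}}\mf{S}_{wt_{rk}},
\]
whose left-hand side behaves cleanly under every $N_i$, and then obtain the stated formula by telescoping over $r=1,\ldots,m$ (the mixed terms with both indices at most $m$ cancel in pairs). As written, your proof does not yet constitute a complete argument.
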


Now let $\alpha_1,\ldots,\alpha_{n-1}$ be as in Theorem~\ref{thm:generalized-chevalley} and define a linear operator $M$ as multiplication by 
\begin{align*}
&\alpha_1x_1+\alpha_2(x_1+x_2)+\alpha_3(x_1+x_2+x_3)+\cdots+\alpha_{n-1}(x_1+\cdots+x_{n-1})\\
=&\beta_1x_1+\beta_2x_2+\cdots+\beta_{n-1}x_{n-1}
\end{align*}
where $\beta_i=\alpha_i+\cdots+\alpha_{n-1}$. By Monk's rule,
$$M\S_w=\sum_{w\lessdot wt_{ij}}(\alpha_i+\cdots+\alpha_{j-1})\S_{wt_{ij}}.$$
Note that Monk's rule only holds modulo the ideal $I$, and not as an identity of polynomials.  Define another linear operator $R$ by
$$R\S_w=\sum_{u:\ w\lessdot u}(b_{w\lessdot u}-d_{w\lessdot u})\S_u.$$
Write $M_1=M$ and define $M_{k+1}=[M_k,R]:=M_kR-RM_k$ for $k\geq1$. Here, $[,]$ is the standard Lie bracket. 

\begin{lemma}\label{lem:MR-RM}
The operator $M_k$ is the same as multiplication by the element $(k-1)!(\beta_1x_1^k+\beta_2x_2^k+\cdots+\beta_{n-1}x_{n-1}^k).$
\end{lemma}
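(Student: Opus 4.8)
The plan is to induct on $k$, the base case $k=1$ being the definition $M_1=M$. The crucial observation is that, although $R$ is defined combinatorially in terms of the basis $\{\S_w\}$, it can be rewritten as $\Delta$ minus a \emph{multiplication} operator — and multiplication operators commute with $M_k$, so they drop out of the Lie bracket.

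First I would show that $R = \Delta - D$ as operators on $\C[x_1,\ldots,x_n]/I$, where $D$ is multiplication by $\sum_{i=1}^{n-1}(n-i)x_i$. Summing Monk's rule (Proposition~\ref{prop:monks-rule}) over $m=1,\ldots,n-1$ and using $\sum_{m=1}^{n-1}(x_1+\cdots+x_m) = \sum_{i=1}^{n-1}(n-i)x_i$ gives $D\S_w = \sum_{w\lessdot wt_{jk}}(k-j)\S_{wt_{jk}}$, the sum over coverings with $j<k$. On the other hand Proposition~\ref{prop:padded}, transported to $\C[x_1,\ldots,x_n]/I$ via Lemma~\ref{lem:Delta-well-defined}, gives $\Delta\S_w = \sum_{w\lessdot u}(1+2b_{w\lessdot u})\S_u$. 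Since $b_{w\lessdot wt_{jk}}+d_{w\lessdot wt_{jk}} = k-j-1$ by Definition~\ref{def:matrix-weights}, we have $1+2b = (k-j)+(b-d)$, so subtracting the two expansions yields $(\Delta-D)\S_w = \sum_{w\lessdot u}(b_{w\lessdot u}-d_{w\lessdot u})\S_u = R\S_w$; since the $\S_w$ form a basis, $R = \Delta - D$.

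For the inductive step, I would assume $M_k$ is multiplication by $g_k := (k-1)!(\beta_1x_1^k+\cdots+\beta_{n-1}x_{n-1}^k)$ and compute $M_{k+1} = [M_k,R] = [M_k,\Delta] - [M_k,D]$. Here $[M_k,D]=0$ because $M_k$ and $D$ are both multiplication operators, so $M_{k+1}=[M_k,\Delta]$. To evaluate this, note that on $\C[x_1,\ldots,x_n]$ itself the formula $\Delta x^\gamma = (\sum_{i=1}^n(n-i-\gamma_i)x_i)x^\gamma$ shows $\Delta = m_{\,\sum_{i=1}^n(n-i)x_i} - E$, where $E := \sum_i x_i^2\frac{\partial}{\partial x_i}$; since $E$ is a derivation, a one-line Leibniz computation gives $[m_h,\Delta] = m_{E(h)}$ for every polynomial $h$, and this identity descends to the quotient because $\Delta$ does. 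Therefore $M_{k+1} = [M_k,\Delta] = m_{E(g_k)}$, and $E(g_k) = (k-1)!\sum_i\beta_i\,x_i^2\cdot kx_i^{k-1} = k!\sum_i\beta_ix_i^{k+1} = g_{k+1}$, which closes the induction.

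I expect the only delicate point — not a genuine obstacle — to be bookkeeping about which identities hold where: Monk's rule and the well-definedness of $\Delta$ require passing to the coinvariant quotient, whereas the Leibniz identity $[m_h,\Delta]=m_{E(h)}$ is cleanest to verify in the polynomial ring and then push down. Once one notices the decomposition $R = \Delta - D$ and that $D$ disappears from the bracket, the remaining steps are short computations.
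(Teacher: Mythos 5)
Your proposal is correct and follows essentially the same route as the paper: both identify the key decomposition $R=\Delta-D$ with $D$ multiplication by $\sum_i(n-i)x_i$ (via Proposition~\ref{prop:padded}, Monk's rule, and $b+d=j-i-1$), note that $D$ drops out of the bracket with $M_k$, and then induct by computing $[M_k,\Delta]$. The only difference is cosmetic: you evaluate $[M_k,\Delta]$ by writing $\Delta=m_{\sum_i(n-i)x_i}-E$ with $E=\sum_i x_i^2\frac{\partial}{\partial x_i}$ a derivation and applying Leibniz, whereas the paper expands both $M_k\Delta x^{\gamma}$ and $\Delta M_k x^{\gamma}$ directly on monomials; both computations yield $k!\sum_i\beta_i x_i^{k+1}$.
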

\begin{proof}
Let's analyze $R$ a bit more. We have
\begin{align*}
R\S_w=&\sum_{u:\ w\lessdot u}(b_{w\lessdot u}-d_{w\lessdot u})\S_u\\
=&\sum_{u:\ w\lessdot u}(1+2b_{w\lessdot u})\S_u-\sum_{u:\ w\lessdot u}(1+b_{w\lessdot u}+d_{w\lessdot u})\S_u\\
=&\sum_{u:\ w\lessdot u}(1+2b_{w\lessdot u})\S_u-\sum_{w\lessdot wt_{ij}}(j-i)\S_{wt_{ij}}\\
=&\Delta\S_w-\big((n-1)x_1+(n-2)x_2+\cdots+x_{n-1}\big)\S_w
\end{align*}
where the last equality follows from Proposition~\ref{prop:padded} and Monk's rule (as a special case of $M$ by assigning $\alpha_1=\cdots=\alpha_{n-1}=1$). 

We use induction on $k$. Since multiplications by polynomials commute with each other, we have $M_kR-RM_k=M_k\Delta-\Delta M_k$. Let's compute what it does on monomials $\bf{x}^\gamma$:
\begin{align*}
x^{\gamma}&=M_k\left(\sum_{i=1}^{n-1}(n-i-\gamma_i)x_i\right)x^{\gamma}\\ &=(k-1)!\left(\sum_{i,j=1}^{n-1}(n-i-\gamma_i)\beta_jx_ix_j^k\right)x^{\gamma}    
\end{align*}
while on the other hand,
\begin{align*}
\Delta M_kx^{\gamma}=&(k-1)!\Delta \sum_j \beta_j x_1^{\gamma_1}\cdots x_{j-1}^{\gamma_{j-1}}x_j^{\gamma_j+k}x_{j+1}^{\gamma_{j+1}}\cdots x_{n-1}^{\gamma_{n-1}}\\
=&(k-1)!\left(\sum_{i\neq j}(n-i-\gamma_i)\beta_jx_ix_j^k\right)x^{\gamma}\\
&+(k-1)!\left(\sum_i(n-i-\gamma_i-k)\beta_ix_i^{k+1}\right)x^{\gamma}.
\end{align*}
Here, the calculation of $\Delta Mx^{\gamma}$ uses the fact that $\Delta$ is defined on all of $\C[x_1,\ldots,x_n]/I$ (Lemma~\ref{lem:Delta-well-defined}), since the coefficient of $x_j$ may exceed $n-j$. As a result, we see that $(M_k\Delta-\Delta M_k)x^{\gamma}=k!(\sum_{i=1}^{n-1}\beta_ix_i^{k+1})x^{\gamma}$. So the induction step goes through.
\end{proof}
\begin{remark}
In fact, the operator $R$ can be more elegantly written as $$Rf=y^{\rho}\cdot\Delta(f/y^{\rho}),$$ when $f\in\tilde V_n$ is already padded.
\end{remark}
\begin{lemma}\label{lem:MS=0}
View $M_k=(k-1)!\sum_{i=1}^{n-1}\beta_ix_i^k$ as polynomials.  If $\sum kp_k={n\choose 2}$ and $p_1<{n\choose 2}$, then $\prod_{k}M_k^{p_k}$ lies in the ideal $I$ .
\end{lemma}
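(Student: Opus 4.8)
The plan is to reduce the statement, via the divided-difference operator $N_{w_0}:=N_{i_1}\cdots N_{i_{\binom{n}{2}}}$ obtained by composing the operators $N_i$ along a reduced word $s_{i_1}\cdots s_{i_{\binom{n}{2}}}$ for $w_0$ (well-defined since the $N_i$ satisfy the braid relations), to a sign-reversing involution. First I would record two standard facts. (a) $N_{w_0}$ annihilates $I\cap\{\text{polynomials of }x\text{-degree }\binom{n}{2}\}$: this follows from the Leibniz rule $N_i(fg)=(N_if)g+(s_if)(N_ig)$ applied to products $fg$ with $g$ a symmetric generator of $I$ (so $N_ig=0$ and a symmetric factor persists), together with the fact that $N_{w_0}$ lowers degree by $\binom{n}{2}$ so kills the degree-$(\binom{n}{2}-\deg g)$ factor $f$. (b) $N_{w_0}\bs{x}^{\rho}=N_{w_0}\mf{S}_{w_0}=\mf{S}_e=1$. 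Since the Hilbert series of $\C[x_1,\ldots,x_n]/I$ has top degree $\binom{n}{2}$ with coefficient $1$, facts (a) and (b) together show that a polynomial of $x$-degree $\binom{n}{2}$ lies in $I$ if and only if it is killed by $N_{w_0}$. (The polynomials here have coefficients in $\Q[\alpha_1,\ldots,\alpha_{n-1}]$, ``lies in $I$'' is meant coefficientwise, and the criterion extends $\Q[\alpha_1,\ldots,\alpha_{n-1}]$-linearly.) So it suffices to show $N_{w_0}\big(\prod_k M_k^{p_k}\big)=0$.

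Writing $\beta_i=\alpha_i+\cdots+\alpha_{n-1}$ (so $\beta_1,\ldots,\beta_{n-1}$ are again algebraically independent), we have $\prod_k M_k^{p_k}=C\cdot h$ with $C=\prod_k((k-1)!)^{p_k}\neq 0$ and $h:=\prod_k\big(\sum_{i=1}^{n-1}\beta_ix_i^k\big)^{p_k}$ of $x$-degree exactly $\sum_k kp_k=\binom{n}{2}$. Using $N_{w_0}(f)=\frac{\sum_{w\in S_n}\mathrm{sgn}(w)\,w(f)}{\prod_{i<j}(x_i-x_j)}$ and extracting the coefficient of $\bs{x}^{\rho}$ (which equals $1$ in $\prod_{i<j}(x_i-x_j)$, with $\rho=(n-1,n-2,\ldots,1,0)$), one finds that $N_{w_0}(h)$ equals $\sum_{\gamma}\mathrm{sgn}(\gamma)\cdot(\text{coefficient of }\bs{x}^{\gamma}\text{ in }h)$, the sum over rearrangements $\gamma$ of $\rho$ and $\mathrm{sgn}(\gamma)$ the sign of the permutation carrying $\rho$ to $\gamma$. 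Since $h$ is free of $x_n$, only the $\gamma$ with $\gamma_n=0$ contribute, i.e.\ those for which $(\gamma_1,\ldots,\gamma_{n-1})$ is a permutation $\sigma$ of $(1,\ldots,n-1)$; hence, up to an overall sign, $N_{w_0}(h)=\sum_{\sigma\in S_{n-1}}\mathrm{sgn}(\sigma)\cdot(\text{coefficient of }x_1^{\sigma(1)}\cdots x_{n-1}^{\sigma(n-1)}\text{ in }h)$. Expanding $h$, each such coefficient is a sum over ordered set partitions $(S_1,\ldots,S_{n-1})$ of the multiset $\mc{S}=\{1^{p_1},2^{p_2},\ldots\}$, where $S_i$ records which exponents $k$ are contributed to the variable $x_i$, subject to $\sum_{k\in S_i}k=\sigma(i)$ for all $i$; the weight of such a partition is $\prod_i\beta_i^{|S_i|}$ times a product of multinomial coefficients that is symmetric under permuting $S_1,\ldots,S_{n-1}$.

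The heart of the argument is then a sign-reversing involution, organized by $\bs{\beta}$-monomial. Fix a monomial $\prod_i\beta_i^{q_i}$ and consider the terms above with $|S_i|=q_i$ for every $i$. Any $\bs{\beta}$-monomial that actually appears must have all $q_i\geq 1$ (a part of size $0$ would have sum $0\notin\{1,\ldots,n-1\}$); and since $p_1<\binom{n}{2}$, at least one element of $\mc{S}$ has weight $\geq 2$, so $\sum_k p_k=|\mc{S}|<\sum_k kp_k=\binom{n}{2}=1+2+\cdots+(n-1)$, which forces the positive integers $q_1,\ldots,q_{n-1}$ to contain a repeat. Choosing (by a fixed rule) indices $a\neq b$ with $q_a=q_b$ and swapping $S_a\leftrightarrow S_b$ defines an involution on these partitions that is fixed-point-free (since $\sum_{k\in S_a}k=\sigma(a)\neq\sigma(b)=\sum_{k\in S_b}k$), preserves the $\bs{\beta}$-monomial and the (symmetric) multinomial weight, and transposes two entries of the one-line word of $\sigma$, hence negates $\mathrm{sgn}(\sigma)$. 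Therefore the coefficient of every $\bs{\beta}$-monomial in $N_{w_0}(h)$ is $0$, so $N_{w_0}(h)=0$ and $\prod_k M_k^{p_k}\in I$.

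The step I expect to require the most care is the translation of the coefficient of $x_1^{\sigma(1)}\cdots x_{n-1}^{\sigma(n-1)}$ in $h$ into weighted ordered set partitions of $\mc{S}$---in particular, verifying that the multinomial weight is symmetric in $S_1,\ldots,S_{n-1}$, which is exactly what makes the involution weight-preserving; facts (a) and (b) may also warrant spelled-out justifications, since elsewhere the paper accesses $\C[x_1,\ldots,x_n]/I$ only through Monk's rule and Proposition~\ref{prop:padded}. The hypothesis $p_1<\binom{n}{2}$ is used in exactly one place---to get $|\mc{S}|<\binom{n}{2}$ and thereby force a repeated part-size---and it is sharp: for $n=3$ and $(p_1,p_2)=(3,0)$ one has $\prod_k M_k^{p_k}=(\beta_1x_1+\beta_2x_2)^3\notin I$, the part-sizes in that case being all distinct.
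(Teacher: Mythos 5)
Your proposal is correct, and it shares the paper's opening move but then diverges in how the key vanishing is established. Like the paper, you reduce to the one-dimensionality of the top graded piece of $\C[x_1,\ldots,x_n]/I$: the paper writes $\prod_k M_k^{p_k}\equiv f\cdot\mf{S}_{w_0}\pmod I$ and must show $f=0$, which is exactly your statement that $N_{w_0}\bigl(\prod_k M_k^{p_k}\bigr)=0$. From there the paper argues algebraically: setting $\beta_i=\beta_j$ makes the product symmetric in $x_i,x_j$, so $N_i$ kills it and hence $(\beta_i-\beta_j)\mid f$; since this holds for all $\binom{n}{2}$ pairs while $\deg_\beta f=\sum_k p_k<\binom{n}{2}$, one gets $f=0$. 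You instead compute $N_{w_0}$ explicitly as a signed sum of monomial coefficients and cancel it with a sign-reversing involution on ordered multiset partitions, with the hypothesis $p_1<\binom{n}{2}$ entering via pigeonhole (all $n-1$ part sizes are positive and sum to $\sum_k p_k<1+2+\cdots+(n-1)$, forcing a repeat). The two uses of the hypothesis are precisely parallel --- your inequality $\sum_k p_k<\binom{n}{2}$ is the paper's degree bound on $f$ --- but the mechanisms differ: the paper's divisibility argument is shorter and exploits the algebraic independence of the $\beta_i$, while yours is more elementary and combinatorially explicit (and makes the sharpness at $p_1=\binom{n}{2}$ transparent), at the cost of more bookkeeping in justifying the formula $N_{w_0}=\bigl(\prod_{i<j}(x_i-x_j)\bigr)^{-1}\sum_{w}\mathrm{sgn}(w)\,w$ and the symmetry of the multinomial weights. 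Both are valid proofs.
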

\begin{proof}
Write $M_k=(k-1)!\sum_{i=1}^{n}\beta_ix_i^k$ with $\beta_n=0$. As $\prod_{k}M_k^{p_k}$ is homogeneous of degree ${n\choose 2}$, we can write it as $f\S_{w_0}$ modulo $I$, where $f$ depends only on $\beta_i$'s. In fact, we can obtain $f\S_{w_0}$ by first multiplying out $\prod_{k}M_k^{p_k}$ and then performing subtraction with respect to the homogeneous part of degree ${n\choose 2}$ in $\C[x_1,\ldots,x_n]/I$. This shows that $f$ is a polynomial of degree at most ${n\choose 2}-2\ell+\ell={n\choose 2}-\ell$. 

On the other hand, if $\beta_i=\beta_{i+1}$, then $\prod_{k\geq1}M_k^{p_k}$ is symmetric in $x_i$ and $x_{i+1}$. Consequently, $N_i(\prod_{k}M_k^{p_k})=0$, where $N_i$ is the $i$-th divided difference operator introduced in Section~\ref{sec:background}. But $0=N_i(f\S_{w_0})=f(N_i\S_{w_0})=f\S_{w_0s_i}$. As $\S_{w_0s_i}\neq0$, we deduce that $(\beta_i-\beta_{i+1})|f$. By symmetry, $(\beta_i-\beta_j)|f$ for all $1\leq i<j\leq n$. As $\deg_{\beta} f=\sum p_k<{n\choose 2}$, we conclude that $f=0$.
\end{proof}
\begin{lemma}\label{lem:MZR=M}
With $M,R$ as above, $(M+zR)^{n\choose 2}\cdot1=M^{n\choose 2}\cdot1.$
\end{lemma}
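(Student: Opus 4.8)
The plan is to pass to the associative algebra of operators on the coinvariant ring $\C[x_1,\ldots,x_n]/I$ generated by $R$ together with all the multiplication operators $M_1, M_2, \dots$, and to rewrite $(M+zR)^{\binom n2}$ into a ``normal form'' in which every $R$ stands to the right of every $M_k$. The only relations I need are that the $M_k$ mutually commute (each is multiplication by a polynomial) and that $R M_k = M_k R - M_{k+1}$, which is precisely the defining identity $M_{k+1}=[M_k,R]$ from Lemma~\ref{lem:MR-RM}. Repeatedly applying these relations expresses $(M_1+zR)^{\binom n2}$ as a $\Z[z]$-linear combination of ordered monomials $M_{k_1}\cdots M_{k_r}R^{s}$.

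First I would record the elementary fact that $R\cdot 1 = R\S_e = 0$: the only covers of $e$ are the adjacent transpositions $t_{i,i+1}$, and for these the regions $B$ and $D$ of Figure~\ref{fig:matrix-weights} are empty, so each coefficient $b_{e\lessdot u}-d_{e\lessdot u}$ vanishes. Hence any normal-form monomial with $s\ge 1$ annihilates $1$. Next, since $M$ and $R$ each raise the grading by one and the rewriting relation $RM_k-M_kR=-M_{k+1}$ is grading-preserving, every monomial $M_{k_1}\cdots M_{k_r}R^s$ occurring in the normal form of $(M+zR)^{\binom n2}$ satisfies $k_1+\cdots+k_r+s=\binom n2$. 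For a monomial with $s=0$ and $r<\binom n2$ I would invoke Lemma~\ref{lem:MS=0} with $p_k=\#\{i:k_i=k\}$: then $\sum_k kp_k=\binom n2$ and $p_1\le r<\binom n2$, so the monomial lies in $I$ and kills $1$ in the coinvariant algebra. The only surviving case is $s=0$, $r=\binom n2$, which (as each $k_i\ge 1$) forces all $k_i=1$, i.e. the monomial $M_1^{\binom n2}=M^{\binom n2}$.

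It then remains to show that $M^{\binom n2}$ occurs with coefficient exactly $1$ in the normal form. Expanding $(M+zR)^{\binom n2}$ over words in $M$ and $zR$, the unique all-$M$ word already is the normal-form monomial $M_1^{\binom n2}$ and contributes coefficient $1$. For any word containing $j\ge 1$ copies of $R$, each application of $R M_k=M_kR-M_{k+1}$ either leaves the number of $R$'s unchanged (the $M_kR$ term) or decreases both the number of $R$'s and the total number of factors by one (the $M_{k+1}$ term); a normal-form monomial with $s=0$ must arise by absorbing all $j$ of the $R$'s this way, hence has at most $\binom n2-j<\binom n2$ factors. So no $R$-containing word contributes to $M^{\binom n2}$, the coefficient is $1$, and applying to $1$ gives $(M+zR)^{\binom n2}\cdot 1=M^{\binom n2}\cdot 1$.

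I expect the main obstacle to be the bookkeeping in this last paragraph: making sure that normalizing a word with several $R$'s can never rebuild the pure monomial $M_1^{\binom n2}$. This rests entirely on the observation that the ``$R$-absorbing'' move $RM_k\mapsto M_{k+1}$ strictly lowers the factor count; everything else in the argument is either the one-line computation $R\cdot 1=0$ or a direct citation of Lemmas~\ref{lem:MR-RM} and~\ref{lem:MS=0}.
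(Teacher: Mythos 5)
Your proposal is correct and follows essentially the same route as the paper: note $R\cdot 1=0$, push the $R$'s to the right via $RM_k=M_kR-M_{k+1}$, and kill every resulting normal-form monomial other than $M_1^{\binom n2}$ using Lemma~\ref{lem:MS=0}. The only difference is that you spell out explicitly why $M_1^{\binom n2}$ survives with coefficient exactly $1$ (the factor-count argument), a point the paper leaves implicit.
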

\begin{proof}
Notice that $R\cdot1=R\cdot \S_{e}=0$ as $b_{w\lessdot s_i}=d_{w\lessdot s_i}=0$. The rest is a simple consequence of Lemma~\ref{lem:MR-RM} and Lemma~\ref{lem:MS=0}. Namely, expand $(M+zR)^{n\choose 2}$ and move $R$'s towards the right such that in each step, we replace $\cdots RM_k\cdots$ by $\cdots M_kR\cdots-\cdots M_{k+1}\cdots$, keeping the total degree. In the end when no such moves are possible, either $R$ appears on the right side, resulting in a term equal to 0 (since $R\cdot1=0$), or $\prod_{k\geq1}M_k^{p_k}$ appears with $\sum kp_k={n\choose 2}$, which is also 0 except the single term $M^{n\choose2}$.
\end{proof}

Theorem~\ref{thm:generalized-chevalley} now follows easily.
\begin{proof}[Proof of Theorem~\ref{thm:generalized-chevalley}]
The weights in Theorem \ref{thm:generalized-chevalley} are given by:
\[
\wt(v \lessdot vt_{ij})=\alpha_i + \alpha_{i+1} + \cdots + \alpha_{j-1} + (b_{v \lessdot vt_{ij}}-d_{v \lessdot vt_{ij}})z.
\]
Recall that we have
\begin{align*}
M\cdot\S_w=&\sum_{w\lessdot wt_{ij}}(\alpha_i+\cdots+\alpha_{j-1})\S_{wt_{ij}}\\
R\cdot\S_w=&\sum_{w\lessdot u}(b_{w\lessdot u}-d_{w\lessdot u})\S_u
\end{align*}
so putting them together,
$$(M+zR)\cdot\S_w=\sum_{w\lessdot u}\wt(w\lessdot u)\S_u.$$ An iteration (or induction) immediately gives
$$(M+zR)^{\ell}\cdot\S_w=\sum_{w\leq u,\ \ell(w)=\ell(u)-\ell}m_{\wt}(w,u)\cdot\S_u.$$
Taking $w=e$ and $\ell={n\choose2}$ in the above setting, we see that $m_{\wt}(e,w_0)$ is the coefficient of $\S_{w_0}$ in $(M+zR)^{n\choose2}$, modulo $I$. By Lemma~\ref{lem:MZR=M}, such coefficient does not depend on $z$. When $z=0$, our result is given by Stembridge \cite{Stembridge} (see also Stanley \cite{Stanley-Bruhat}). 
\end{proof}

\section{Proof of Theorem \ref{thm:generalized-macdonald}}
\label{sec:generalized-macdonald}
We first note a simple fact about the specialization of $\SS_w$: since $\SS_w$ has total $x$-degree $\ell(w)$ and total $y$-degree ${n\choose 2}-\ell(w)$, we have 
\[
(\Delta\SS_w)(1,\ldots,1)=\left({n\choose 2}-\ell(w)\right)\SS_w(1,\ldots,1).
\]
We then have the following lemma.
\begin{lemma}\label{lem:gen-mac-notdepend}
Fix $w\in S_n$. Then $$\sum_{u:\ w\lessdot u}\SS_u(1,\ldots,1)(b_{w\lessdot u}-c_{w\lessdot u})=0.$$
\end{lemma}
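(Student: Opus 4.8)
The plan is to reduce the lemma to the equality of two sums, and then derive it by applying Proposition~\ref{prop:padded} at the all-ones point, once for $w$ and once for $w^{-1}$. Concretely, the lemma is equivalent to
\[
\sum_{u:\ w\lessdot u}b_{w\lessdot u}\,\SS_u(1,\ldots,1)=\sum_{u:\ w\lessdot u}c_{w\lessdot u}\,\SS_u(1,\ldots,1).
\]
The key preliminary observation is that $\SS_u(1,\ldots,1)$ is invariant under $u\mapsto u^{-1}$: padding $\bs{x}^{\gamma}\mapsto\bs{x}^{\gamma}\bs{y}^{\rho-\gamma}$ does not change the value at all ones, so $\SS_u(1,\ldots,1)=\mf{S}_u(1,\ldots,1)$, and by Macdonald's identity (Proposition~\ref{prop:original-macdonald}) this equals $\mf{S}_{u^{-1}}(1,\ldots,1)$ since reversing a reduced word of $u$ yields a reduced word of $u^{-1}$ with the same product of letters. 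Together with $\ell(w)=\ell(w^{-1})$, the fact that $u\mapsto u^{-1}$ is an order automorphism of $\bs{S}_n$ (hence a bijection between the upper covers of $w$ and those of $w^{-1}$), and the identity $c_{w\lessdot u}=b_{w^{-1}\lessdot u^{-1}}$ obtained from Proposition~\ref{prop:symmetries}(1), this is all we will need.

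For the main step, I would evaluate the identity of Proposition~\ref{prop:padded} at $(1,\ldots,1)$ and use the relation $(\Delta\SS_w)(1,\ldots,1)=\big({n\choose 2}-\ell(w)\big)\SS_w(1,\ldots,1)$ recorded just before the lemma, obtaining
\[
\sum_{u:\ w\lessdot u}\SS_u(1,\ldots,1)+2\sum_{u:\ w\lessdot u}b_{w\lessdot u}\,\SS_u(1,\ldots,1)=\Big({n\choose 2}-\ell(w)\Big)\SS_w(1,\ldots,1).
\]
Now apply this same identity with $w$ replaced by $w^{-1}$ and substitute $u\mapsto u^{-1}$ throughout the resulting sums: the first sum is unchanged because $\SS_{u^{-1}}(1,\ldots,1)=\SS_u(1,\ldots,1)$; the second sum becomes $2\sum_{u:\ w\lessdot u}c_{w\lessdot u}\,\SS_u(1,\ldots,1)$ using $b_{w^{-1}\lessdot u^{-1}}=c_{w\lessdot u}$; and the right-hand side is unchanged since $\ell(w^{-1})=\ell(w)$ and $\SS_{w^{-1}}(1,\ldots,1)=\SS_w(1,\ldots,1)$. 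Subtracting the two displayed identities cancels the first sum and the right-hand side, leaving $2\sum_{u}(b_{w\lessdot u}-c_{w\lessdot u})\SS_u(1,\ldots,1)=0$, which is exactly the lemma.

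I do not anticipate a real obstacle: the argument is a symmetry trick resting entirely on tools already available. The one spot requiring care is the inversion-invariance of $\SS_u(1,\ldots,1)$ — this is where Macdonald's reduced-word identity is genuinely used — together with the bookkeeping that $u\mapsto u^{-1}$ simultaneously permutes the upper covers, preserves the all-ones specialization, and swaps the statistics $b$ and $c$ (Proposition~\ref{prop:symmetries}(1), reflecting the permutation matrix across its main diagonal). The conceptual point worth flagging is that even though $\mf{S}_u\neq\mf{S}_{u^{-1}}$ as polynomials, their values at all ones coincide, and it is precisely this that lets the inversion automorphism be exploited here despite its not respecting the Schubert (or padded Schubert) basis itself.
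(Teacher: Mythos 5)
Your proposal is correct and follows essentially the same route as the paper's own proof: both apply Proposition~\ref{prop:padded} to $w$ and to $w^{-1}$, specialize at all ones, use Macdonald's identity (Proposition~\ref{prop:original-macdonald}) to get $\SS_u(1,\ldots,1)=\SS_{u^{-1}}(1,\ldots,1)$, and invoke Proposition~\ref{prop:symmetries}(1) to swap $b$ and $c$ before subtracting. The bookkeeping points you flag (inversion permutes upper covers, preserves length, and the principal specializations match) are exactly the ones the paper checks.
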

\begin{proof}

By Proposition \ref{prop:original-macdonald}, we have
\begin{align*}
    \mf{S}_u(1,\ldots,1) &= \frac{1}{\ell(u)!} \sum_{s_{a_1}\cdots s_{a_{\ell(u)}} = u} a_1 \cdots a_{\ell(u)} \\
    &= \frac{1}{\ell(u)!} \sum_{s_{a_1}\cdots s_{a_{\ell(u)}} = u} a_{\ell(u)} \cdots a_1 \\
    &= \mf{S}_{u^{-1}}(1,\ldots,1).
\end{align*}
Thus $\mf{S}_u$ and $\mf{S}_{u^{-1}}$ have the same number of monomials, so we have $\SS_u(1,\ldots,1)=\SS_{u^{-1}}(1,\ldots,1)$. Therefore, as $\ell(u)=\ell(u^{-1})$, $(\Delta\SS_w)(1,\ldots,1)=(\Delta\SS_{w^{-1}})(1,\ldots,1)$. In addition, notice that $w\lessdot u$ if and only if $w^{-1}\lessdot u^{-1}$ and that $b_{w\lessdot u}=c_{w^{-1}\lessdot u^{-1}}$ by Proposition \ref{prop:symmetries}.

Apply Proposition~\ref{prop:padded} to $w$ and $w^{-1}$ separately. We have
\begin{align*}
\Delta\SS_{w^{-1}}=&\sum_{u:\ w^{-1}\lessdot u}(1+2b_{w^{-1}\lessdot u^{-1}})\SS_{u}\\
=&\sum_{u^{-1}:\ w^{-1}\lessdot u^{-1}}(1+2b_{w^{-1}\lessdot u^{-1}})\SS_{u^{-1}}\\
=&\sum_{u:\ w\lessdot u}(1+2c_{w\lessdot u})\SS_{u^{-1}},\\
\Delta\SS_w=&\sum_{u:\ w\lessdot u}(1+2b_{w\lessdot u})\SS_u.
\end{align*}
Now take the principal specialization and subtract these two equations. The left-hand side becomes zero as explained above. Recalling from above that $\SS_{u^{-1}}(1,\ldots,1)=\SS_u(1,\ldots,1)$, we obtain the desired equality.
\end{proof}
Now we are ready to prove Theorem~\ref{thm:generalized-macdonald}.
\begin{proof}[Proof of Theorem~\ref{thm:generalized-macdonald}]
We proceed by induction on ${n\choose 2}-\ell(w)$. The base case $w=w_0$ is trivial as both sides equal 1. Now fix $w$ and assume that the statement is true for all $u$ with $\ell(u)>\ell(w)$. The following calculation is straightforward:
\begin{align*}
m_{\wt}(w,w_0)=&\sum_{u:\ w\lessdot u}(1+b_{w\lessdot u}(2-z)+c_{w\lessdot u}z)m_{\wt}(u,w_0)\\
=&\sum_{u:\ w\lessdot u}(1+b_{w\lessdot u}(2-z)+c_{w\lessdot u}z)\left({n\choose 2}-\ell(u)\right)!\cdot\SS_u(1,\ldots,1)\\
=&\left({n\choose2}-\ell(w)-1\right)!\sum_{u:\ w\lessdot u}(1+2b_{w\lessdot u})\SS_u(1,\ldots,1)\\
&-\left({n\choose2}-\ell(w)-1\right)!z\sum_{u:\ w\lessdot u}(b_{w\lessdot u}-c_{w\lessdot u})\SS_u(1,\ldots,1).
\end{align*}
By Lemma~\ref{lem:gen-mac-notdepend}, the second term in the above expression becomes 0. And by the principal specialization of Proposition~\ref{prop:padded}, we have that 
\[
\sum_{u:\ w\lessdot u}(1+2b_{w\lessdot u})\SS_u(1,\ldots,1)=({n\choose 2}-\ell(w))\SS_w(1,\ldots,1).\] 
Thus the first term in the above expression becomes $({n\choose 2}-\ell(w))!\S_w(1,\ldots,1)$, which is what we want.
\end{proof}


We can now complete the proof of Theorem \ref{thm:any-pair}.

\begin{proof}[Proof of Theorem \ref{thm:any-pair}]
There are six cases to consider, depending on which pair of $z_A,z_B,z_C,$ and $z_D$ are equal to $z$ and $2-z$ (the others being zero); which element of the pair is sent to $z$ or $2-z$ does not matter, since the claimed result is independent of $z$.

For the pair $\{z_B, z_C\}$, letting $w=e$ in Theorem \ref{thm:generalized-macdonald} proves the result.  For $\{z_B,z_D\}$, letting $\alpha_1=\cdots =\alpha_{n-1}$ in Theorem \ref{thm:generalized-chevalley} gives weights
\begin{align*}
    \wt(v \lessdot w)=(1+b_{v \lessdot w}+d_{v \lessdot w})+(b_{v\lessdot w}-d_{v\lessdot w})z
\end{align*}
which clearly give all of the desired linear combinations of $b_{v\lessdot w}$ and $d_{v \lessdot w}$.

Applying the symmetries from Proposition \ref{prop:symmetries} then yields the remaining pairs.
\end{proof}

\section*{Acknowledgements}

The authors wish to thank Alex Postnikov and John Stembridge for helpful suggestions.

\bibliographystyle{plain}
\bibliography{main}
\end{document}